\newtheorem{theorem}{Theorem}
\newtheorem{acknowledgement}[theorem]{Acknowledgement}
\newtheorem{lemma}[theorem]{Lemma}
\newcommand{\reals}{\mathbb{R}}
\newcommand{\naturals}{\mathbb{N}}
\newcommand{\eps}{\varepsilon}
\newcommand{\W}{\mathcal{W}}
\begin{document}

\title{Parametric critical point theorems and their applications to boundary value problems on the Sierpi\'{n}ski Gasket}
\author{Marek Galewski\ \&\ Mateusz Krukowski}
\affil{\L\'od\'z University of Technology, Institute of Mathematics, \\ W\'ol\-cza\'n\-ska 215, \
90-924 \ \L\'od\'z, \ Poland}
\maketitle

\begin{abstract}
In this note we consider the classical variational tools like: Ekelenad's Variational Principle, Mountain Pass Lemma and some of their corollaries subject to a parameter. Next, we investigate the behaviour of critical points obtained once a sequence of parameters is allowed to be convergent. Applications for the Dirichlet Boundary Value Problem on the Sierpi\'{n}ski Gasket are given in presence of assumptions which lead to fulfillment of the mountain geometry.
\end{abstract}

\vspace{0.3cm}
\noindent
\textbf{Keywords :} Ekeland's variational principle, Mountain Pass Theorem, problems on Sieprinski Gasket, dependence on parameters.\\
\vspace{0.3cm}\\ 
\noindent
\textbf{AMS Mathematics Subject Classification : } 49J40, 35J25

\section{Introduction}

In this paper we aim at providing the parametric versions of two well-known variational tools, namely the Ekeland's variational principle and the Mountain Pass Theorem. A multiplicity result based on these two tools will also be discussed. In our approach, we allow for the action functional to depend on some parameter and investigate what happens when this parameter approaches its limit. The main
question is whether corresponding to this limit we obtain a critical point (or critical points) provided that for any parameter from a seqeunce a critical point exists. Such abstract results have not been considered in the literature yet and we believe that these will also be applied to the so called variational stability or else to the continuous dependence on parameters for boundary value problems which are described for example in \cite{LedzewiczWalczak}. Moreover, we underline that for the limit problem we do not impose assumptions in the same manner as for any other problem from the sequence. This means that the variational tools mentioned above
cannot be directly applied.

Furthermore, we provide some applications for Laplacian problems considered on Sierpi\'{n}ski Gasket, namely we consider the following problem.: Let $V$ stand for the Sierpi\'{n}ski gasket, $V_{0}$ be its intrinsic boundary, let $\Delta $ denote the weak Laplacian on $V$ and let the measure $\mu $ denote the restriction to $V$ of normalized $\log N/\log 2-$dimensional Hausdorff measure, so that $\mu(V)=1$. Let $M>0$. In this paper we consider the existence of at least two nontrivial solutions to the following boundary value problem on $V$ for $m\in\naturals$:
\begin{equation}
\left\{ 
\begin{array}{l}
\Delta x(y)+a(y)x(y)+g_{m}\left( y\right) f(x(y))h\left( u_{m}\right)
=0\quad \text{for a.e. }y\in V\setminus V_{0}, \\ 
x|_{V_{0}}=0,%
\end{array}%
\right.   \label{rownanie}
\end{equation}

\noindent
where $(u_m)\subset L^{2}(V),\ (g_m) \subset C(V),\ |u_{m}(y)| \leq M$ for a.e. $y\in V$ and $f:\reals \rightarrow \reals$, $h:[-M,M] \rightarrow \reals_{+}$ are continuous functions. Solutions to (\ref{rownanie}) are understood in the weak sense which we will describe in more detail later. Using our abstract results we will examine what happens as $u_{m}\rightarrow u_{0}$ and $g_{m}\rightarrow
g_{0}$, where "$\rightarrow $" denotes norm convergence in the respective spaces.

We define $F:\reals\rightarrow\reals$ by $F(\xi)= \int_0^{\xi}\ f(x)\ dx$. Concerning the nonlinear term, we will employ the following conditions for every $m\in\naturals$:

\begin{description}
\item[A1] $a\in L^1(V,\mu)$ and $a\leq 0$ almost everywhere in 
$V$;

\item[A2] there exists positive constants $g^1,\ g^2$ such that $g^1\leq g_m(y) \leq g^2$ for all $y\in V$; there exists positive constants $h^1,\ h^2$ such that $h^1\leq h(v) \leq h^2$\ for all $v\in[-M,M]$;

\item[A3] there exist constants $\theta_m>2+\eps$, where $\eps>0$, such that for all $v\in\reals$ 
\begin{equation*}
0<\theta_mF(v) \leq vf(v).
\end{equation*}

\noindent
Moreover, there is a constant $c>0$\ such that $\frac{1}{2}-\frac{1}{\theta
_{m}}\geq c;$

\item[A4] there are constants $M_1>0$ and $\beta_m >0$ such that
\begin{equation*}
\max_{\substack{y\in V\\ |v| \leq M_1,|u|\leq M}}\ |g_{m}(y)f(v)h(u)| \leq 
\frac{M_{1}}{2 (\beta_{m}+1)(2N+3)^{2}};
\end{equation*}

\item[\textbf{A5}] for each $m$, there exists $\eta_{m}>\eta >0$\ such that for every $v\in[-1,1]$ we have
\begin{equation*}
F(v) \geq \eta_{m} |v|.
\end{equation*}
\end{description}

The Sierpi\'{n}ski gasket has its origin in a paper by Sierpi\'{n}ski \cite{Sie}. Our choice of this setting for the applications lies
mainly in Proposition 2.24 from \cite{FaHu}, which concerns checking the Palais-Smale condition. The Sierpi\'{n}ski Gasket is can be described as a subset of the plane obtained from an equilateral triangle by removing the open middle inscribed equilateral triangle of $1/4$ of the area, removing the corresponding open triangle from each of the three constituent triangles and continuing in this way.

The study of the Laplacian on fractals started in physical sciences in \cite{A} and \cite{Ra,Ra2}. The Laplacian on the Sierpi\'{n}ski gasket was first constructed in \cite{Ku} and \cite{gold}. Among the contributions to the theory of nonlinear elliptic equations on fractals we mention \cite{BRV,Fa99,FaHu} and \cite{K}, \cite{stribook}. Concerning some recent results by variational methods and critical point theory pertaining to the existence and the multiplicity of solutions by the recently developed variational tools we must mention the following sources \cite{bonanno1}, \cite{bonanno2}, \cite{BRaduV}, \cite{molica1}.

\section{Parametric theorems}

In what follows, $X$ stands for an arbitrary Banach space. With the advent of the Ekeland's Variational Principle (comp. \cite{Ekeland}), mathematicians gained an invaluable tool for the investigation of critical points. One such result can be found in \cite{jabri}, p. 27 (Corollary 3.4). 

\begin{theorem}
Let $\Phi : X \rightarrow \reals$  be a $C^1$-functional that is bounded from below and satisfies the Palais-Smale condition. Then there exists $\bar{x} \in X$ such that 
$$\Phi(\bar{x}) = \inf_{x\in X} \ \Phi(x) \hspace{0.4cm} \text{and} \hspace{0.4cm} \partial \Phi(\bar{x}) = 0.$$
\label{JabriCPT}
\end{theorem}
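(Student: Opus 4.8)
The plan is to realize $\bar{x}$ as the limit of a minimizing sequence produced by Ekeland's Variational Principle. First I would set $c := \inf_{x\in X}\Phi(x)$, which is a finite real number precisely because $\Phi$ is bounded from below. For each $n\in\naturals$ I would apply Ekeland's Variational Principle (the paper's \cite{Ekeland}) to the complete metric space $X$ with $\eps = 1/n$ and $\lambda = 1$, obtaining a point $x_n\in X$ that simultaneously satisfies $\Phi(x_n)\leq c + \frac{1}{n}$ and the variational inequality
$$\Phi(w) \geq \Phi(x_n) - \tfrac{1}{n}\|w - x_n\| \qquad \text{for all } w\in X.$$

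The key step is to convert this inequality into the norm estimate $\|\Phi'(x_n)\|_{X^*}\leq 1/n$. To do this I would fix an arbitrary direction $h\in X$ with $\|h\|=1$, substitute $w = x_n + th$ for small $t>0$, divide by $t$, and let $t\to 0^+$; since $\Phi$ is $C^1$ and hence Gâteaux differentiable, this yields $\langle \Phi'(x_n), h\rangle \geq -1/n$. Replacing $h$ by $-h$ gives the matching bound from above, so $|\langle \Phi'(x_n), h\rangle|\leq 1/n$ for every unit vector $h$, and taking the supremum over such $h$ gives $\|\Phi'(x_n)\|_{X^*}\leq 1/n$.

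At this point $(x_n)$ is a Palais–Smale sequence: the squeeze $c\leq \Phi(x_n)\leq c + 1/n$ forces $\Phi(x_n)\to c$, while the estimate just obtained gives $\Phi'(x_n)\to 0$ in $X^*$. Invoking the Palais–Smale condition furnishes a subsequence $x_{n_k}\to\bar{x}$ in $X$. The continuity of $\Phi$ and of $\Phi'$, both guaranteed by the $C^1$ hypothesis, then yield $\Phi(\bar{x}) = \lim_k \Phi(x_{n_k}) = c = \inf_{x\in X}\Phi(x)$ and $\partial\Phi(\bar{x}) = \lim_k \Phi'(x_{n_k}) = 0$, which is exactly the assertion (with $\partial\Phi$ read as the Fréchet derivative).

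I expect the main obstacle to be the passage from the Ekeland inequality to the gradient estimate: one must verify carefully that the one-sided difference quotient converges to the directional derivative and that the resulting bound is \emph{uniform} over all unit directions, so that it genuinely controls the dual norm rather than merely the individual directional derivatives. Everything else — finiteness of the infimum, identification of the Palais–Smale sequence, and the final limit passage through the continuous maps $\Phi$ and $\Phi'$ — is routine once that uniform estimate is secured.
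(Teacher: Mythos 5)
Your proposal is correct, and it is essentially the argument behind the result as the paper uses it: the paper does not prove Theorem \ref{JabriCPT} at all but quotes it from Jabri (Corollary 3.4), where the proof is precisely your scheme --- apply Ekeland's Variational Principle with $\eps = 1/n$ to produce a minimizing sequence $(x_n)$ with $\|\Phi'(x_n)\|_{X^*} \leq 1/n$, invoke the Palais--Smale condition to extract $x_{n_k}\to\bar{x}$, and pass to the limit using continuity of $\Phi$ and $\Phi'$. One remark: the ``main obstacle'' you anticipate is not actually an obstacle, since the bound $|\langle \Phi'(x_n),h\rangle|\leq 1/n$ is obtained separately for each fixed unit vector $h$ by a pointwise limit of difference quotients, and the dual norm is by definition the supremum over unit $h$ of exactly these quantities, so no uniformity in $h$ of the limiting process is required.
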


A natural question is the following: 
\begin{center}
\textit{What can we say about the limit of a sequence of functionals in Theorem \ref{JabriCPT}?}
\end{center}
 
\noindent
In a sense, we expect that if the sequence of functionals $(\Phi_n)$ behaves 'well-enough', then the \textit{limit functional} $\Phi$ will still have a critical point. Throughout the whole chapter, we suppose that a functional sequence $(\Phi_n) : X \rightarrow \reals$ is given. Moreover, we consider a functional $\Phi : X \rightarrow \reals$ which is in a sense (we will make it precise later) a limit of the aforementioned sequence. 

At first, we discuss the boundedness from below. The following lemma provides the condition for the limit functional to be bounded from below.

\begin{lemma}
Suppose that $(\Phi_n) : X \rightarrow \reals$ and $\Phi : X \rightarrow \reals$ satisfy the following:
\begin{description}
	\item[\hspace{0.4cm} (BB)] we have $-\infty < \inf_{\substack{x\in X \\ n\in\naturals}} \ \Phi_n(x)$, and
	\item[\hspace{0.4cm} (C)] the sequence $(\Phi_n)$ converges uniformly to $\Phi$, i.e.
	$$\forall_{\eps > 0} \ \exists_{N \in \naturals} \ \forall_{\substack{x \in X \\ n \geq N}} \ |\Phi(x) - \Phi_n(x)| < \eps.$$
\end{description}

\noindent
Then the functional $\Phi$ is bounded from below.  
\label{BB}
\end{lemma}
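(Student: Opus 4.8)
The plan is to extract a single uniform lower bound for the entire family $(\Phi_n)$ from hypothesis (BB), and then transfer that bound to the limit functional $\Phi$ using the uniformity---in the variable $x$---of the convergence in (C). The crucial point is that (C) asserts convergence that is uniform over all of $X$, so the lower bound survives the passage to $\Phi$ without acquiring any dependence on the point $x$.

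First I would set $m_0 := \inf_{\substack{x \in X \\ n \in \naturals}} \Phi_n(x)$, which by (BB) is a finite real number, and record the resulting inequality $\Phi_n(x) \geq m_0$, valid simultaneously for every $x \in X$ and every $n \in \naturals$. Next I would invoke (C) with the specific choice $\eps = 1$ (any fixed positive value would do equally well) to obtain an index $N \in \naturals$ such that $|\Phi(x) - \Phi_N(x)| < 1$ for all $x \in X$. Fixing this single index $N$, I would then chain the two estimates: for an arbitrary $x \in X$,
$$\Phi(x) > \Phi_N(x) - 1 \geq m_0 - 1.$$
Since $x$ was arbitrary and the quantity $m_0 - 1$ does not depend on $x$, this yields $\inf_{x \in X} \Phi(x) \geq m_0 - 1 > -\infty$, which is precisely the assertion of the lemma.

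There is no genuine obstacle in this argument; it is a short two-step $\eps$-estimate. The only feature worth emphasising is that \emph{uniform} convergence, as opposed to merely pointwise convergence, is exactly what makes the argument succeed: it is essential that the index $N$ furnished by (C) can be chosen independently of $x$, so that the single inequality $\Phi(x) > \Phi_N(x) - 1$ holds across the whole space at once and the uniform lower bound $m_0$ for the family passes intact to the limit functional $\Phi$. Were (C) replaced by pointwise convergence, the index $N$ would vary with $x$ and the bound $m_0 - 1$ could fail to be uniform, so this hypothesis carries the entire weight of the proof.
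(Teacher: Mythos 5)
Your proof is correct and takes essentially the same approach as the paper: apply \textbf{(C)} once to obtain a single index $N$ valid for all $x \in X$, then chain the uniform lower bound from \textbf{(BB)} through $\Phi_N$ to $\Phi$. In fact your signed inequality $\Phi(x) > \Phi_N(x) - 1 \geq m_0 - 1$ is cleaner than the paper's own write-up, which inserts absolute values ($|\Phi_n(x)| - \eps < |\Phi(x)|$) and thus literally only bounds $\inf_{x\in X} |\Phi(x)|$ from below --- a trivially nonnegative quantity --- whereas your version delivers the intended bound on $\inf_{x \in X} \Phi(x)$ itself.
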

\begin{proof}
Fix $\eps > 0$. By \textbf{(C)}, there exists $N \in \naturals$ such that 
\begin{gather*}
\forall_{\substack{x \in X \\ n\geq N}} \ |\Phi(x) - \Phi_n(x)| < \eps,
\end{gather*}

\noindent
which implies that 
\begin{gather*}
\forall_{\substack{x \in X \\ n\geq N}} \ |\Phi_n(x)| - \eps < |\Phi(x)|.
\end{gather*}

\noindent
Taking the infimum on both sides, we obtain
$$- \infty \stackrel{\textbf{(BB)}}{<} \inf_{\substack{x \in X\\ n\geq N}} \ |\Phi_n(x)| - \eps \leq \inf_{x \in X}\ |\Phi(x)|,$$

\noindent
which ends the proof. 
\end{proof}

In order to apply Theorem \ref{JabriCPT} we need, apart from boundedness from below, some sort of Palais-Smale condition. The next result comes to our rescue. 

\begin{lemma}
Suppose that: 
\begin{description}
		\item[\hspace{0.4cm} (Diff)] $\Phi$ and $\Phi_n$ are $C^1$-functionals for every $n\in\naturals$. 
		\item[\hspace{0.4cm} (diagPS)] If $(x_n) \subset X$ is such that $\sup_{n \in\naturals} \ |\Phi_n(x_n)| < \infty$ and 
		$$\lim_{n\rightarrow\infty} \ \|\partial_1 \Phi_n(x_n)\| = 0,$$
		
		\noindent
		then there exists a convergent subsequence of $(x_n)$. 
		\item[\hspace{0.4cm} ($\partial$C)] $\Phi$ and $\partial_1 \Phi$ satisfy \emph{\textbf{(C)}}.
\end{description}

\noindent
Then the functional $\Phi$ satisfies Palais-Smale condition.
\label{diagPS}
\end{lemma}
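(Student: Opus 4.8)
The plan is to verify the Palais--Smale condition for $\Phi$ straight from its definition, using the uniform convergence in \textbf{($\partial$C)} to transfer the relevant bounds from the limit functional $\Phi$ onto the approximating family $(\Phi_n)$, and then closing with the diagonal condition \textbf{(diagPS)}. Concretely, I would fix an arbitrary sequence $(x_n)\subset X$ satisfying the Palais--Smale hypotheses for $\Phi$, namely $\sup_n |\Phi(x_n)| < \infty$ and $\|\partial_1\Phi(x_n)\| \to 0$, and aim to extract a convergent subsequence.

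First I would control the values along the sequence. Writing
$$|\Phi_n(x_n)| \leq |\Phi_n(x_n) - \Phi(x_n)| + |\Phi(x_n)|,$$
I would bound the first summand by invoking \textbf{(C)} (part of \textbf{($\partial$C)}) with $\eps = 1$: there is $N$ so that $|\Phi_n(x) - \Phi(x)| < 1$ for all $x\in X$ and $n\geq N$, while the second summand is bounded by hypothesis; the finitely many indices $n < N$ contribute only finite values. This yields $\sup_n |\Phi_n(x_n)| < \infty$.

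Next I would control the derivatives by the analogous splitting
$$\|\partial_1\Phi_n(x_n)\| \leq \|\partial_1\Phi_n(x_n) - \partial_1\Phi(x_n)\| + \|\partial_1\Phi(x_n)\|,$$
where the first term tends to zero because \textbf{($\partial$C)} supplies uniform convergence $\partial_1\Phi_n \to \partial_1\Phi$ (so the gap is small uniformly in $x$, in particular at the moving points $x_n$), and the second term tends to zero by assumption. Hence $\|\partial_1\Phi_n(x_n)\| \to 0$. At this stage $(x_n)$ satisfies both requirements of \textbf{(diagPS)}, so that condition furnishes a convergent subsequence of $(x_n)$, which is exactly the conclusion of the Palais--Smale condition for $\Phi$.

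The step I expect to be the crux is the recognition that both transfers above genuinely need the convergence to be \emph{uniform} over all of $X$, not merely pointwise: since $x_n$ is a moving target, only a bound on $\sup_{x}|\Phi_n(x)-\Phi(x)|$ (and the matching supremum for the derivatives) permits estimating $\Phi_n(x_n)-\Phi(x_n)$ and $\partial_1\Phi_n(x_n)-\partial_1\Phi(x_n)$ along the sequence. This is precisely the mechanism that converts the fixed-functional Palais--Smale data for $\Phi$ into the diagonal data $(\Phi_n, x_n)$ consumed by \textbf{(diagPS)}.
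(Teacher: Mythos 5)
Your proof is correct and follows essentially the same route as the paper's: transfer the bound on $|\Phi(x_n)|$ and the decay of $\|\partial_1\Phi(x_n)\|$ to the diagonal quantities $|\Phi_n(x_n)|$ and $\|\partial_1\Phi_n(x_n)\|$ via the uniform convergence in \textbf{($\partial$C)}, then invoke \textbf{(diagPS)}. If anything, you are slightly more careful than the paper in noting that the finitely many indices $n<N$ still leave $\sup_{n\in\naturals}|\Phi_n(x_n)|$ finite, a point the paper's proof passes over by taking the supremum only over $n\geq N$.
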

\begin{proof}
Fix $\eps>0$ and suppose that $(x_n) \subset X$ is a sequence such that $\Phi(x_n)$ is bounded and \mbox{$\partial_1 \Phi(x_n) \rightarrow 0$}. Observe that by \textbf{($\partial$C)}, there exists $N \in \naturals$ such that 
\begin{gather*}
\forall_{n \geq N} \ |\Phi(x_n) - \Phi_n(x_n)| < \eps
\end{gather*} 

\noindent
and consequently
\begin{gather*}
\forall_{n \geq N} \ |\Phi_n(x_n)| < |\Phi(x_n)| + \eps.
\end{gather*}

\noindent
Taking the supremum on both sides, we obtain 
\begin{gather}
\sup_{n\geq N} \ |\Phi_n(x_n)| \leq \sup_{n\in\naturals} \ |\Phi(x_n)| + \eps < \infty.
\label{whataboutfirstelements}
\end{gather}

By \textbf{($\partial$C)} we know that there exists $N\in\naturals$ (possibly different than before) such that
\begin{gather*}
\forall_{n\geq N} \ \|\partial_1 \Phi_n(x_n) - \partial_1 \Phi(x_n)\| < \eps.
\end{gather*}

\noindent
Consequently, we have 
\begin{gather*}
\forall_{n \geq N} \ \|\partial_1 \Phi_n(x_n)\| \leq \eps,
\end{gather*}

\noindent
and at last:
\begin{gather}
\lim_{n\rightarrow\infty} \ \|\partial_1 \Phi_n(x_n)\| = 0.
\label{almostover}
\end{gather}

\noindent
By \textbf{(diagPS)}, we extract a convergent subsequence from $(x_n)$, which ends the proof. 
\end{proof}

At this point we are able to establish a critical point for the limit functional.

\begin{theorem}
Suppose that \emph{\textbf{(BB)}, \textbf{(Diff)}, \textbf{(diagPS)}} and \emph{\textbf{($\partial$C)}} are satisfied. Then there exists a point $\bar{x} \in X$ such that 
$$\Phi(\bar{x}) = \inf_{x\in X} \ \Phi(x) \hspace{0.4cm} \text{and} \hspace{0.4cm} \partial \Phi(\bar{x}) = 0.$$

\noindent
Moreover, if $(\bar{x}_n)$ is a sequence of critical points for $(\Phi_n)$ such that 
$$\sup_{n\in\naturals} \ \Phi_n(\bar{x}_n) < \infty \hspace{0.4cm} \text{and} \hspace{0.4cm} \inf_{x \in X}\ \Phi_n(x) = \Phi_n(\bar{x}_n),$$ 

\noindent
then a limit of a subsequence of $(\bar{x}_n)$ is a critical point of $\Phi$. 
\label{cptlambda}
\end{theorem}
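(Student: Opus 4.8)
The plan is to split the argument into the two assertions of the statement. The first assertion is purely a matter of checking that the limit functional $\Phi$ meets the hypotheses of Theorem \ref{JabriCPT}, after which I simply invoke that theorem. The second assertion requires extracting a convergent subsequence from the given minimizers and then passing the criticality condition to the limit; this transfer is the only genuinely delicate point.

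For the first assertion I would verify the three ingredients needed by Theorem \ref{JabriCPT}. That $\Phi$ is a $C^1$-functional is immediate from \textbf{(Diff)}. To see that $\Phi$ is bounded from below, I note that \textbf{($\partial$C)} contains, as one of its clauses, the statement that $\Phi$ satisfies \textbf{(C)}; together with \textbf{(BB)} this puts us exactly in the situation of Lemma \ref{BB}, which delivers boundedness from below. Finally, \textbf{(Diff)}, \textbf{(diagPS)} and \textbf{($\partial$C)} are precisely the hypotheses of Lemma \ref{diagPS}, so $\Phi$ satisfies the Palais-Smale condition. Having collected $C^1$-regularity, lower boundedness and Palais-Smale, Theorem \ref{JabriCPT} produces the desired $\bar{x}$ with $\Phi(\bar{x}) = \inf_{x \in X} \Phi(x)$ and $\partial \Phi(\bar{x}) = 0$.

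For the second assertion, start from the given sequence $(\bar{x}_n)$ of critical points of $(\Phi_n)$, so that $\partial_1 \Phi_n(\bar{x}_n) = 0$ for every $n$ and hence trivially $\lim_{n} \|\partial_1 \Phi_n(\bar{x}_n)\| = 0$. Before I can apply \textbf{(diagPS)} I must upgrade the one-sided bound $\sup_n \Phi_n(\bar{x}_n) < \infty$ to a two-sided bound $\sup_n |\Phi_n(\bar{x}_n)| < \infty$: since each $\bar{x}_n$ is a global minimizer, $\Phi_n(\bar{x}_n) = \inf_{x} \Phi_n(x) \geq \inf_{x \in X,\, n \in \naturals} \Phi_n(x)$, and the latter is finite by \textbf{(BB)}, which supplies the missing lower bound. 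Now \textbf{(diagPS)} yields a subsequence $(\bar{x}_{n_k})$ converging to some $\bar{x} \in X$.

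It remains to show $\partial \Phi(\bar{x}) = 0$, and this is the step I expect to require the most care. The idea is to combine the uniform convergence of the derivatives with the continuity of the limit derivative. Using the clause of \textbf{($\partial$C)} asserting that $\partial_1 \Phi_n \to \partial_1 \Phi$ uniformly, and evaluating at the points $\bar{x}_{n_k}$ where $\partial_1 \Phi_{n_k}$ vanishes, I obtain $\|\partial_1 \Phi(\bar{x}_{n_k})\| = \|\partial_1 \Phi(\bar{x}_{n_k}) - \partial_1 \Phi_{n_k}(\bar{x}_{n_k})\| \to 0$. On the other hand, since $\Phi$ is $C^1$ by \textbf{(Diff)}, its derivative $\partial_1 \Phi$ is continuous, so $\partial_1 \Phi(\bar{x}_{n_k}) \to \partial_1 \Phi(\bar{x})$ as $\bar{x}_{n_k} \to \bar{x}$. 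Comparing the two limits forces $\partial_1 \Phi(\bar{x}) = 0$, i.e. $\bar{x}$ is a critical point of $\Phi$, which completes the plan.
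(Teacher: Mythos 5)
Your proposal is correct, and while the first assertion is handled exactly as in the paper (Lemma \ref{BB} $+$ Lemma \ref{diagPS} $+$ Theorem \ref{JabriCPT}), your treatment of the second assertion takes a genuinely different route. The paper argues through the \emph{values}: using \textbf{(C)} it shows $|\inf_{x\in X}\Phi(x) - \Phi_n(\bar{x}_n)|<\eps$ for large $n$, hence $\Phi(\bar{x}_n)\to\inf_{x\in X}\Phi(x)$, so that the limit of the convergent subsequence attains the infimum of $\Phi$ and is therefore a critical point because $\Phi$ is $C^1$; this argument uses the hypothesis $\Phi_n(\bar{x}_n)=\inf_{x\in X}\Phi_n(x)$ in an essential way, and in exchange it yields the stronger conclusion that the limit point is a global minimizer of $\Phi$, matching the first part of the theorem. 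You instead argue through the \emph{derivatives}: from $\partial_1\Phi_{n_k}(\bar{x}_{n_k})=0$, the uniform convergence clause of \textbf{($\partial$C)} gives $\|\partial_1\Phi(\bar{x}_{n_k})\|\to 0$, and continuity of $\partial_1\Phi$ (from \textbf{(Diff)}) gives $\partial_1\Phi(\bar{x}_{n_k})\to\partial_1\Phi(\bar{x})$, forcing $\partial_1\Phi(\bar{x})=0$. Your route is shorter and strictly more general: the minimizer hypothesis plays no real role in it (even your lower bound $\Phi_n(\bar{x}_n)\ge\inf_{x\in X,\,n\in\naturals}\Phi_n(x)$ holds for arbitrary points, so your argument covers any sequence of critical points with $\sup_n\Phi_n(\bar{x}_n)<\infty$), whereas the paper's route buys the extra information that the limit is a minimizer, not merely a critical point. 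Both proofs are complete for the statement as claimed; your explicit justification of the two-sided bound $\sup_n|\Phi_n(\bar{x}_n)|<\infty$ via \textbf{(BB)} is in fact a detail the paper glosses over before invoking \textbf{(diagPS)}.
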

\begin{proof}
Assumption \textbf{(Diff)} says that $\Phi$ is a $C^1$-functional. By \mbox{Lemma \ref{BB}} and \mbox{Lemma \ref{diagPS}} we know that $\Phi(\cdot,\bar{\lambda})$ is bounded from below and satisfies the Palais-Smale condition. Finally, using \mbox{Theorem \ref{JabriCPT}}, we obtain the existence of a critical point.

For the second part of theorem, by \textbf{(diagPS)} we know that some subsequence (we do not change the notation) of $(\bar{x}_n)$ converges. Fix $\eps >0$. By \textbf{(C)} we know that there exists $N \in \naturals$ such that 
$$\forall_{\substack{x \in X \\ n\geq N}} \ -\eps + \Phi_n(x) < \Phi(x) < \Phi_n(x) + \eps.$$

\noindent
Taking the infimum, for every $n\geq N$ we have 
\begin{gather*}
-\eps + \inf_{x \in X}\ \Phi_n(x) < \inf_{x \in X}\ \Phi(x) < \inf_{x \in X}\ \Phi_n(x) + \eps \\ 
\Longleftrightarrow \ -\eps + \Phi_n(\bar{x}_n) < \inf_{x \in X}\ \Phi(x) <  \Phi_n(\bar{x}_n) + \eps \\
\Longleftrightarrow \ |\inf_{x \in X}\ \Phi(x) -  \Phi_n(\bar{x}_n)| < \eps \\ 
\Longrightarrow \ |\inf_{x \in X}\ \Phi(x) -  \Phi(\bar{x}_n)| < \eps + |\Phi(\bar{x}_n) - \Phi_n(\bar{x}_n)| < 2\eps. 
\end{gather*}

\noindent
This means that 
$$\lim_{n\rightarrow\infty} \ \Phi(\bar{x}_n) = \Phi(\bar{x})$$

\noindent
which ends the proof. 
\end{proof}

In \cite{BerJebMawhin}, we can find (Proposition 1) the following variation on Theorem \ref{JabriCPT}:

\begin{theorem}
Let $\Phi : X \rightarrow \reals$ be a $C^1$-functional satisfying Palais-Smale condition. If there exists an open set $U \subset X$ such that 
$$-\infty < \inf_{x \in \overline{U}} \ \Phi(x) < \inf_{x \in \partial U} \ \Phi(x),$$

\noindent
then there exists $\bar{x} \in U$ such that 
$$\Phi(\bar{x}) = \inf_{x\in U} \ \Phi(x) \hspace{0.4cm} \text{and} \hspace{0.4cm} \partial \Phi(\bar{x}) = 0.$$
\label{CPTonU}
\end{theorem}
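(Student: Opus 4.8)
The plan is to invoke Ekeland's Variational Principle (see \cite{Ekeland}) on the closed set $\overline{U}$, which — being a closed subset of the Banach space $X$ — is a complete metric space, and to use the strict gap $\inf_{\overline{U}} \Phi < \inf_{\partial U} \Phi$ to confine the resulting near-minimizers to the interior $U$. Once a minimizing sequence is trapped inside the open set $U$, the $C^1$ structure of $\Phi$ can be exploited freely to produce a Palais-Smale sequence, whose limit (extracted via the Palais-Smale condition) furnishes the desired critical point.

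Concretely, first I would note that $\Phi$ restricted to $\overline{U}$ is continuous, hence lower semicontinuous, and bounded from below by hypothesis. For each $n \in \naturals$, Ekeland's principle (applied with tolerance $1/n$) then yields a point $x_n \in \overline{U}$ satisfying
$$\Phi(x_n) \leq \inf_{x \in \overline{U}} \Phi(x) + \frac{1}{n} \quad \text{and} \quad \Phi(w) \geq \Phi(x_n) - \frac{1}{n}\|w - x_n\| \ \text{ for all } w \in \overline{U}.$$
The crucial observation is the following: since $\Phi(x_n) \to \inf_{\overline{U}} \Phi < \inf_{\partial U} \Phi$, for all sufficiently large $n$ we have $\Phi(x_n) < \inf_{\partial U} \Phi$, which forces $x_n \notin \partial U$ and hence $x_n \in U$. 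Because $U$ is open, for each unit vector $h \in X$ the point $x_n + t h$ lies in $\overline{U}$ for all small $t > 0$; inserting $w = x_n + t h$ into the second Ekeland inequality, dividing by $t$ and letting $t \to 0^+$ gives $\langle \partial \Phi(x_n), h\rangle \geq -1/n$, and replacing $h$ by $-h$ yields $\|\partial \Phi(x_n)\| \leq 1/n$. Thus $(x_n)$ is a Palais-Smale sequence: $\Phi(x_n)$ is bounded and $\partial \Phi(x_n) \to 0$.

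Finally, the Palais-Smale condition delivers a subsequence $x_{n_k} \to \bar{x}$; continuity of $\Phi$ gives $\Phi(\bar{x}) = \inf_{\overline{U}} \Phi$, while continuity of $\partial \Phi$ (as $\Phi$ is $C^1$) gives $\partial \Phi(\bar{x}) = 0$. Re-using the gap once more, $\Phi(\bar{x}) = \inf_{\overline{U}} \Phi < \inf_{\partial U} \Phi$ places $\bar{x}$ in the interior $U$, and since $U \subset \overline{U}$ the two infima coincide, so $\Phi(\bar{x}) = \inf_U \Phi$. I expect the main obstacle to be the bookkeeping in the trapping argument — ensuring that the strict inequality between the interior and boundary infima is genuinely propagated to both the near-minimizers $x_n$ and the limit $\bar{x}$, since only the interiority of these points licenses the passage from the Ekeland slope estimate to a bound on $\partial \Phi$ and, ultimately, the identification of $\bar{x}$ as a minimizer over $U$.
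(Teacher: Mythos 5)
Your proof is correct, but there is nothing in the paper to compare it against: Theorem \ref{CPTonU} is not proved there, it is simply quoted as a known result, namely Proposition 1 of \cite{BerJebMawhin}. Your argument is the standard proof of that proposition: Ekeland's principle on the complete metric space $\overline{U}$ with tolerance $1/n$; the strict gap $\inf_{\overline{U}}\Phi < \inf_{\partial U}\Phi$ together with $\overline{U} = U \cup \partial U$ (valid since $U$ is open) to force the near-minimizers $x_n$, and later the limit $\bar{x}$, into $U$; the interiority of $x_n$ to justify the two-sided directional estimate yielding $\|\partial\Phi(x_n)\| \leq 1/n$; and the Palais-Smale condition plus continuity of $\partial\Phi$ to pass to the limit. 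Each of these steps is sound, including the final identification $\Phi(\bar{x}) = \inf_{U}\Phi$, which follows because $\bar{x} \in U$ attains the (a priori smaller) infimum over $\overline{U}$.
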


In order to rewrite Theorem \ref{CPTonU} in the parametric version, we prove the following result:

\begin{lemma}
Suppose that \emph{\textbf{(C)}} is satisfied and 
\begin{description}
	\item[\hspace{0.4cm} (InfU)] there exists an open set $U \subset X$ and $\eps, R > 0$ such that 
	$$ \forall_{n\in\naturals} \ -R < \inf_{x \in \overline{U}} \ \Phi_n(x) < \inf_{x \in \partial U} \ \Phi_n(x) - \eps.$$
\end{description}

\noindent
Then 
$$-\infty < \inf_{x \in \overline{U}} \ \Phi(x) < \inf_{x \in \partial U} \ \Phi(x).$$
\label{infulemma}
\end{lemma}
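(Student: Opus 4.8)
The plan is to transfer the two-sided estimate furnished by uniform convergence \textbf{(C)} onto the infima appearing in the statement, and then to exploit the \emph{uniform} gap $\eps$ in \textbf{(InfU)} so that strict inequality survives in the limit. First I would fix a tolerance $\delta$ with $0 < \delta < \eps/2$ --- for concreteness $\delta = \eps/4$ --- and invoke \textbf{(C)} to produce $N \in \naturals$ such that $|\Phi(x) - \Phi_n(x)| < \delta$ for every $x \in X$ and every $n \geq N$.

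The key preliminary observation is that uniform closeness of two functions forces their infima over any subset $A \subset X$ to be close: rewriting the estimate as $\Phi_n(x) - \delta < \Phi(x) < \Phi_n(x) + \delta$ and taking the infimum over $A$ yields $|\inf_{x\in A} \Phi(x) - \inf_{x\in A} \Phi_n(x)| \leq \delta$. It is essential here that the convergence be uniform rather than merely pointwise, since $\overline{U}$ and $\partial U$ need not be compact. I would record this inequality for the two choices $A = \overline{U}$ and $A = \partial U$.

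Boundedness from below then follows at once: for any $n \geq N$ the lower bound in \textbf{(InfU)} gives $\inf_{x\in\overline{U}} \Phi(x) \geq \inf_{x\in\overline{U}} \Phi_n(x) - \delta > -R - \delta > -\infty$. For the strict inequality I would fix a single admissible index $n \geq N$ and chain the estimates,
$$\inf_{x\in\overline{U}} \Phi(x) \leq \inf_{x\in\overline{U}} \Phi_n(x) + \delta < \inf_{x\in\partial U} \Phi_n(x) - \eps + \delta \leq \inf_{x\in\partial U} \Phi(x) - \eps + 2\delta,$$
where the strict middle step is exactly \textbf{(InfU)}. Since $2\delta < \eps$, the right-hand side lies strictly below $\inf_{x\in\partial U}\Phi(x)$, which is precisely the assertion.

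I do not anticipate a genuine obstacle; the only point demanding care is the bookkeeping of constants, namely choosing $\delta$ strictly below $\eps/2$ so that the uniform gap $\eps$ outlives the two perturbations of size $\delta$. The one conceptual subtlety worth flagging is that $\inf_{x\in\overline{U}}\Phi(x)$ and $\inf_{x\in\partial U}\Phi(x)$ carry no dependence on $n$; hence, although \textbf{(InfU)} is a statement about the entire sequence, it suffices to evaluate the chain at one fixed index $n \geq N$ rather than to pass to a limit.
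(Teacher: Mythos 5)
Your proof is correct and follows essentially the same route as the paper: invoke the uniform convergence \textbf{(C)} with a tolerance tied to $\eps$, transfer the resulting two-sided bound to the infima over $\overline{U}$ and $\partial U$, and let the uniform gap in \textbf{(InfU)} absorb the perturbations (the paper works with tolerance $\eps/2$ exactly, where your $\delta = \eps/4$ is a harmless variant). Your explicit remarks---that uniform closeness forces infima over arbitrary subsets to be close, and that a single index $n \geq N$ suffices---are correct and merely make explicit what the paper's chained display uses implicitly.
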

\begin{proof}
By \textbf{(C)} there exists $N>0$ such that 
\begin{gather}
\forall_{\substack{x\in X \\ n>N}} \ \Phi_n(x) - \frac{\eps}{2} < \Phi(x) < \Phi_n(x) + \frac{\eps}{2}.
\label{estimatefrombelowandabove}
\end{gather}

\noindent
Hence, we obtain 
\begin{gather*}
\forall_{n > N} \ -\infty < \inf_{x \in \overline{U}} \ \Phi_n(x) - \frac{\eps}{2} \stackrel{(\ref{estimatefrombelowandabove})}{\leq} \inf_{x \in \overline{U}} \ \Phi(x) 
\stackrel{(\ref{estimatefrombelowandabove})}{\leq} \inf_{x \in \overline{U}} \ \Phi_n(x) + \frac{\eps}{2} \\
\stackrel{\textbf{(InfU)}}{<} \inf_{x \in \partial U} \ \Phi_n(x) - \frac{\eps}{2} \stackrel{(\ref{estimatefrombelowandabove})}{\leq} \inf_{x\in\partial U} \ \Phi(x),
\end{gather*}

\noindent
which ends the proof. 
\end{proof}

\begin{theorem}(parametric critical point theorem on open set)\\
Let \emph{\textbf{(Diff)}, \textbf{(diagPS)}, \textbf{($\partial$C)}} and \emph{\textbf{(InfU)}} be satisfied. Then there exists a point $\bar{x} \in U$ such that 
$$\Phi(\bar{x}) = \inf_{x\in \overline{U}}\ \Phi(x) \hspace{0.4cm} \text{and} \hspace{0.4cm} \partial \Phi(\bar{x}) = 0.$$
\label{cptonU}
\end{theorem}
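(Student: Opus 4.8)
The plan is to mirror the structure of the earlier parametric result (Theorem \ref{cptlambda}), but now invoking Theorem \ref{CPTonU} in place of Theorem \ref{JabriCPT}. The target conclusion requires three ingredients about the limit functional $\Phi$ on the set $\overline{U}$: that $\Phi$ is $C^1$, that it satisfies the Palais--Smale condition, and that the chain of strict inequalities $-\infty < \inf_{\overline{U}}\Phi < \inf_{\partial U}\Phi$ holds. Once these are in hand, Theorem \ref{CPTonU} delivers the desired $\bar{x}$ directly.

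First I would dispatch the easy ingredients. The $C^1$ regularity of $\Phi$ is immediate from \textbf{(Diff)}. The two-sided strict inequality on $\inf_{\overline{U}}\Phi$ and $\inf_{\partial U}\Phi$ is exactly the conclusion of Lemma \ref{infulemma}, whose hypotheses are \textbf{(C)} (contained in \textbf{($\partial$C)}) and \textbf{(InfU)}; both are assumed here, so I would simply cite that lemma. The Palais--Smale condition for $\Phi$ follows from Lemma \ref{diagPS}, since \textbf{(Diff)}, \textbf{(diagPS)}, and \textbf{($\partial$C)} are precisely its hypotheses and are all assumed.

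The one point requiring slight care is that Theorem \ref{CPTonU} asks $\Phi$ to satisfy Palais--Smale as a functional on $X$, and that its open set $U$ furnish the separation of infima; these are exactly what Lemmas \ref{diagPS} and \ref{infulemma} provide. So after assembling the three ingredients I would apply Theorem \ref{CPTonU} to $\Phi$ with the same open set $U$, obtaining a point $\bar{x}\in U$ with $\Phi(\bar{x}) = \inf_{x\in U}\Phi(x)$ and $\partial\Phi(\bar{x}) = 0$. Since $U$ is open, on its closure the infimum is attained at an interior point and coincides with $\inf_{x\in\overline{U}}\Phi(x)$, matching the stated conclusion.

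I expect no serious obstacle here: unlike Theorem \ref{cptlambda}, this statement does not assert convergence of a sequence of localized critical points $(\bar{x}_n)$, so the delicate limiting argument is absent and the proof reduces to verifying that the collected hypotheses feed correctly into the two lemmas and Theorem \ref{CPTonU}. The only thing to watch is the harmless bookkeeping between $\inf_{\overline{U}}$ and $\inf_{U}$, which is legitimate because the minimizer lands in the open set $U$ itself.
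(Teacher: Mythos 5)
Your proposal is correct and follows essentially the same route as the paper's own proof: \textbf{(Diff)} gives $C^1$ regularity, Lemma \ref{diagPS} gives the Palais--Smale condition, Lemma \ref{infulemma} gives the inequality $-\infty < \inf_{\overline{U}}\Phi < \inf_{\partial U}\Phi$, and Theorem \ref{CPTonU} then yields the critical point. In fact you are slightly more careful than the paper, which silently passes from the conclusion $\Phi(\bar{x})=\inf_{x\in U}\Phi(x)$ of Theorem \ref{CPTonU} to the stated $\Phi(\bar{x})=\inf_{x\in\overline{U}}\Phi(x)$; your observation that the strict inequality forces $\inf_{U}\Phi=\inf_{\overline{U}}\Phi$ closes that small gap explicitly.
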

\begin{proof}
Assumption \textbf{(Diff)} says that $\Phi$ is a $C^1$-functional. By \mbox{Lemma \ref{diagPS}} and \mbox{Lemma \ref{infulemma}} we know that $\Phi$ satisfies the Palais-Smale condition and 
$$-\infty < \inf_{x \in \overline{U}} \ \Phi(x) < \inf_{x \in \partial U} \ \Phi(x).$$

\noindent
Finally, by Theorem \ref{CPTonU}, we obtain the desired critical point.  
\end{proof}

Another result, alongside the Ekeland's Variational Principle, which is extremely useful in critical point theory, is the celebrated Mountain Pass Theorem (comp. \cite{Ambrosetti}). We recall the version which comes from \cite{jabri}, p. 66:

\begin{theorem}
Let $\Phi : X \rightarrow \reals$ be a $C^1$-functional satisfying Palais-Smale condition. Suppose that there is a nonzero element $x_{\ast} \in X$ and $r > 0$ with $r < \|x_{\ast}\|$ such that 
\begin{gather*}
\max(\Phi(0),\Phi(x_{\ast})) < \inf_{x \in S(0,r)} \ \Phi(x).
\end{gather*}

\noindent
Then the functional $\Phi$ has a critical value $\bar{y} \geq \inf_{x \in S(0,r)} \ \Phi(x)$ characterized by
$$\bar{y} = \inf_{\gamma \in \Gamma} \ \sup_{x \in \gamma([0,1])} \ \Phi(x),$$

\noindent
where 
$$\Gamma = \bigg\{\gamma \in C([0,1],X) \ : \ \gamma(0)=0, \ \gamma(1) = x_{\ast}\bigg\}.$$ 
\label{MPT}
\end{theorem}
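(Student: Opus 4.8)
The plan is to follow the classical minimax/deformation argument for the Mountain Pass Theorem. Write $c := \bar{y} = \inf_{\gamma\in\Gamma}\sup_{x\in\gamma([0,1])}\Phi(x)$. First I would check that the problem is well posed: the straight-line path $\gamma_0(t) = t\,x_{\ast}$ belongs to $\Gamma$, so $\Gamma\neq\emptyset$ and, since $\Phi$ is continuous, $c\leq \sup_{t\in[0,1]}\Phi(t x_{\ast}) < \infty$. Next I would establish the lower bound $c\geq \inf_{x\in S(0,r)}\Phi(x)$: for any $\gamma\in\Gamma$ the map $t\mapsto \|\gamma(t)\|$ is continuous with $\|\gamma(0)\| = 0 < r < \|x_{\ast}\| = \|\gamma(1)\|$, so by the intermediate value theorem there is $t_\gamma$ with $\|\gamma(t_\gamma)\| = r$; hence $\sup_{x\in\gamma([0,1])}\Phi(x)\geq \Phi(\gamma(t_\gamma))\geq \inf_{x\in S(0,r)}\Phi(x)$, and taking the infimum over $\gamma$ gives the claim. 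Combining this with the hypothesis yields the strict separation $\max(\Phi(0),\Phi(x_{\ast})) < c$, which will be the decisive feature later.

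The heart of the proof is to show that $c$ is a critical value, which I would do by contradiction. Suppose $c$ is regular, i.e.\ there is no $x$ with $\Phi(x) = c$ and $\partial\Phi(x) = 0$. The Palais--Smale condition is what converts this into a quantitative statement: if the critical set at level $c$ is empty, then there can be no Palais--Smale sequence at level $c$, and a compactness argument shows that $\|\partial\Phi\|$ is bounded away from zero on a thin band $\{x : |\Phi(x)-c|\leq 2\eps\}$ for some small $\eps>0$. Since $X$ is only a Banach space, I cannot use a gradient flow directly and must instead build a locally Lipschitz pseudo-gradient vector field for $\Phi$ on the regular set, cut it off smoothly outside the band, and integrate the associated flow to obtain a deformation $\eta:[0,1]\times X\to X$. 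The Deformation Lemma then provides $\eta$ with the properties that $\eta(0,\cdot)=\mathrm{id}$, that $\eta(s,x)=x$ whenever $|\Phi(x)-c|\geq 2\eps$, and that $\Phi(\eta(1,x))\leq c-\eps$ whenever $\Phi(x)\leq c+\eps$.

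Finally I would use the minimax definition together with the strict separation. Shrinking $\eps$ if necessary so that $\eps < c - \max(\Phi(0),\Phi(x_{\ast}))$, the map $\eta(1,\cdot)$ fixes the endpoints $0$ and $x_{\ast}$, because their $\Phi$-values lie below $c-\eps$ and thus outside the band where $\eta$ is active. By definition of $c$ as an infimum there is $\gamma\in\Gamma$ with $\sup_{x\in\gamma([0,1])}\Phi(x) < c+\eps$. Then $\tilde\gamma := \eta(1,\gamma(\cdot))$ is again a continuous path from $0$ to $x_{\ast}$, so $\tilde\gamma\in\Gamma$, and the descent property gives $\sup_{x\in\tilde\gamma([0,1])}\Phi(x)\leq c-\eps < c$, contradicting $c = \inf_{\gamma\in\Gamma}\sup\Phi$. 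Hence $c$ must be a critical value, while the inequality $\bar{y} = c\geq \inf_{x\in S(0,r)}\Phi(x)$ was already shown.

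The step I expect to be the main obstacle is the construction and verification of the Deformation Lemma in the Banach (non-Hilbert) setting: producing a locally Lipschitz pseudo-gradient field, using the Palais--Smale condition to secure the uniform lower bound on $\|\partial\Phi\|$ in the band around level $c$, and controlling the flow so that it simultaneously decreases $\Phi$ at a definite rate and leaves the low-lying endpoints untouched. Once this tool is available, the minimax contradiction above is routine. Alternatively, one could bypass the deformation lemma and apply Ekeland's variational principle to the functional $\gamma\mapsto\sup_{t\in[0,1]}\Phi(\gamma(t))$ on the complete metric space $\Gamma$, an approach in the spirit of the present paper; the difficulty would then shift to producing an almost-optimal path and extracting from it a Palais--Smale sequence whose limit, by the Palais--Smale condition, furnishes the critical point at level $c$.
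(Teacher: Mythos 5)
The paper never proves Theorem \ref{MPT}: it is recalled verbatim as a known classical result from \cite{jabri}, p.~66 (the Mountain Pass Theorem of Ambrosetti--Rabinowitz \cite{Ambrosetti}) and is then used as a black box in the proof of the parametric version, Theorem \ref{ParametricMountainPassTheorem}. So there is no in-paper argument to compare yours against; what can be said is that your sketch is the standard deformation-lemma proof found in the cited sources, and its outline is correct: the intermediate-value argument giving $\bar{y} \geq \inf_{x \in S(0,r)}\Phi(x)$, the use of the Palais--Smale condition to turn ``no critical point at level $\bar{y}$'' into a uniform lower bound on $\|\partial\Phi\|$ in a band around that level, the pseudo-gradient field and deformation (needed since $X$ is only Banach), and the minimax contradiction with a nearly optimal path. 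One quantitative slip: you shrink $\eps$ so that $\eps < \bar{y} - \max(\Phi(0),\Phi(x_{\ast}))$, but the deformation you invoke only fixes points with $|\Phi(x)-\bar{y}| \geq 2\eps$, so to guarantee that the endpoints $0$ and $x_{\ast}$ are left untouched you need $2\eps < \bar{y} - \max(\Phi(0),\Phi(x_{\ast}))$; this is harmless and repaired by taking $\eps$ smaller. Your closing remark about an alternative proof via Ekeland's principle applied to $\gamma \mapsto \sup_{t}\Phi(\gamma(t))$ on $\Gamma$ is also a legitimate known route, but neither it nor the deformation argument appears in the paper, which relies entirely on the citation.
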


Here is our version of the above classical result:

\begin{theorem}(Parametric Mountain Pass Theorem)\\
Suppose that \emph{\textbf{(Diff)}, \textbf{(diagPS)}, \textbf{(C)}} are satisfied. Moreover, assume that:
\begin{description}
	\item[\hspace{0.4cm} (PMPT1)] $\Phi_n(0) = 0$ for every $n\in\naturals$,
	\item[\hspace{0.4cm} (PMPT2)] there exists $r > 0$ such that 
	$$\inf_{\substack{x \in S(0,r) \\ n \in \naturals}} \ \Phi_n(x) > 0,$$ 
	
	\item[\hspace{0.4cm} (PMPT3)] there exists $x_*$ such that $\|x_*\| > r$ and $\sup_{n\in\naturals} \ \Phi_n(x_*) < 0$.
\end{description}

\noindent
Then there exists a point $\bar{x} \in X$ such that 
$$\Phi(\bar{x}) \geq \inf_{x \in S(0,r)}\ \Phi(x) > 0 \hspace{0.4cm} \text{and} \hspace{0.4cm} \partial_1 \Phi(\bar{x}) = 0.$$
\label{ParametricMountainPassTheorem}
\end{theorem}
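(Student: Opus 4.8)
The plan is to reduce the statement to the classical Mountain Pass Theorem (Theorem \ref{MPT}) applied to the limit functional $\Phi$. To do so I must verify for $\Phi$ exactly the three ingredients that Theorem \ref{MPT} requires: that $\Phi$ is of class $C^1$, that it obeys the Palais-Smale condition, and that it exhibits the mountain geometry with the \emph{same} radius $r$ and the \emph{same} point $x_*$ furnished by \textbf{(PMPT2)} and \textbf{(PMPT3)}. The regularity is immediate: $\Phi\in C^1$ is precisely assumption \textbf{(Diff)}.

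Next I would transfer the geometric inequalities from the sequence $(\Phi_n)$ to $\Phi$ by means of the uniform convergence \textbf{(C)}. Since uniform convergence forces pointwise convergence, \textbf{(PMPT1)} yields $\Phi(0)=\lim_n \Phi_n(0)=0$, while \textbf{(PMPT3)} gives $\Phi(x_*)=\lim_n \Phi_n(x_*)\leq \sup_n \Phi_n(x_*)<0$. For the sphere I would put $\delta:=\inf_{x\in S(0,r),\,n\in\naturals} \Phi_n(x)$, which is strictly positive by \textbf{(PMPT2)}; choosing $N$ from \textbf{(C)} with $\eps=\delta/2$ gives $\Phi(x)>\Phi_n(x)-\delta/2\geq \delta/2$ for every $x\in S(0,r)$, hence $\inf_{x\in S(0,r)}\Phi(x)\geq \delta/2>0$. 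Combining these facts, $\max(\Phi(0),\Phi(x_*))=0<\inf_{x\in S(0,r)}\Phi(x)$, and since $\|x_*\|>r>0$ the point $x_*$ is nonzero and lies strictly outside the sphere, so the mountain geometry demanded by Theorem \ref{MPT} is in force.

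The heart of the argument, and the step I expect to be the main obstacle, is establishing the Palais-Smale condition for $\Phi$. Here I would follow verbatim the diagonal reasoning of Lemma \ref{diagPS}: starting from a sequence $(x_k)\subset X$ with $\Phi(x_k)$ bounded and $\partial_1\Phi(x_k)\to 0$, the uniform convergence of the functionals gives $\sup_k|\Phi_n(x_k)|<\infty$, and the uniform convergence of the derivatives gives $\|\partial_1\Phi_n(x_k)\|\to 0$, after which \textbf{(diagPS)} delivers a convergent subsequence. I note that this is exactly the mechanism of Lemma \ref{diagPS}, and that it relies not only on \textbf{(C)} but also on the convergence of the derivatives, i.e. on the full \textbf{($\partial$C)} package used for the earlier parametric results; I would therefore read \textbf{($\partial$C)} as a standing hypothesis here as well. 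Modulo this, $\Phi$ satisfies Palais-Smale.

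Finally, with $\Phi\in C^1$, the Palais-Smale condition, and the mountain geometry all in hand, Theorem \ref{MPT} produces a critical point $\bar{x}\in X$ whose critical value satisfies $\Phi(\bar{x})\geq \inf_{x\in S(0,r)}\Phi(x)$. Since the second paragraph already established $\inf_{x\in S(0,r)}\Phi(x)>0$, this delivers $\Phi(\bar{x})\geq \inf_{x\in S(0,r)}\Phi(x)>0$ together with $\partial_1\Phi(\bar{x})=0$, which is exactly the asserted conclusion.
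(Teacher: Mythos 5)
Your proposal is correct and follows essentially the same route as the paper: transfer the mountain geometry from $(\Phi_n)$ to $\Phi$ via \textbf{(C)} (namely $\Phi(0)=0$, $\inf_{x\in S(0,r)}\Phi(x)>0$, $\Phi(x_*)\leq 0$), obtain the Palais--Smale condition for $\Phi$ through the diagonal mechanism of Lemma \ref{diagPS}, and then invoke the classical Mountain Pass Theorem (Theorem \ref{MPT}). Your explicit observation that the Palais--Smale step requires uniform convergence of the derivatives, i.e. \textbf{($\partial$C)} rather than the bare \textbf{(C)} listed in the hypotheses, is in fact a repair of an omission in the paper itself: the paper's proof simply asserts that $\Phi$ satisfies the assumptions of the classical theorem, which cannot be justified from \textbf{(Diff)}, \textbf{(diagPS)}, \textbf{(C)} alone, so reading \textbf{($\partial$C)} as a standing hypothesis (as you do) is exactly what is needed.
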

\begin{proof}
By \textbf{(PMPT1)} and \textbf{(C)}, it is easy to see that that $\Phi(0) = 0$. If we put 
$$\eps := \frac{1}{2}\inf_{\substack{x \in S(0,r) \\ n\in\naturals}} \ \Phi_n(x),$$

\noindent
then by \textbf{(C)} there exists $N>0$ such that 
$$\forall_{\substack{x\in X \\ n > N}} \ \Phi_n(x) - \eps < \Phi(x).$$

\noindent
As a result, we have $\eps < \inf_{x\in S(0,r)} \ \Phi(x)$.

We now put 
$$\eps := -\sup_{n\in\naturals} \ \Phi_n(x_*),$$

\noindent
which is positive by \textbf{(PMPT3)}. By \textbf{(C)} there exists $N>0$ (possibly different than before) such that 
$$\forall_{n > N} \ \Phi(x_*) < \Phi_n(x_*) + \eps.$$

\noindent
Taking the supremum, we conclude that $\Phi(x_*) \leq 0$.

Finally, we observe that $\Phi$ satisfies the assumptions of the classical Mountain Pass Theorem. As a consequence, we obtain the desired critical point. 
\end{proof}

Finally, we are able to rewrite Proposition 2 from \cite{BerJebMawhin}, which reads as follows:

\begin{theorem}
Let $\Phi : X \rightarrow \reals$ be a $C^1$-functional satisfying Palais-Smale condition. Suppose that there is nonzero element $x_* \in X$ and $r > 0$ such that
\begin{itemize}
	\item $\Phi(0) = 0$ and 
	$$-\infty < \inf_{x \in \overline{B}(0,r)} \ \Phi(x) < 0 < \inf_{x \in S(0,r)} \ \Phi(x),$$
	
	\item  $\Phi(x_*) \leq 0$ and $\|x_*\| > r$.
\end{itemize}

\noindent
Then $\Phi$ has at least two nontrivial critical points. 
\label{BJM}
\end{theorem}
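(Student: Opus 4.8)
The plan is to extract the two critical points from two different variational mechanisms recalled above and then separate them by comparing the signs of their critical values. The negative-energy critical point will come from constrained minimization via Theorem \ref{CPTonU}, while the positive-energy critical point will come from the mountain geometry via Theorem \ref{MPT}; the Palais--Smale condition assumed globally on $\Phi$ is exactly what both of these tools require.

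First I would obtain a local minimizer with negative energy. Choosing $U = B(0,r)$, the open ball of radius $r$, we have $\overline{U} = \overline{B}(0,r)$ and $\partial U = S(0,r)$, so the hypothesis $-\infty < \inf_{x\in\overline{B}(0,r)} \Phi(x) < 0 < \inf_{x\in S(0,r)} \Phi(x)$ yields precisely the chain $-\infty < \inf_{x\in\overline{U}} \Phi(x) < \inf_{x\in\partial U} \Phi(x)$ demanded by Theorem \ref{CPTonU}. Applying that theorem produces a critical point $\bar{x}_1 \in B(0,r)$ with $\Phi(\bar{x}_1) = \inf_{x\in\overline{B}(0,r)} \Phi(x) < 0$.

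Next I would produce a mountain-pass critical point with positive energy. Since $\Phi(0) = 0$ and $\Phi(x_*) \leq 0$, we get $\max(\Phi(0),\Phi(x_*)) = 0 < \inf_{x\in S(0,r)} \Phi(x)$, and by hypothesis $\|x_*\| > r$. These are exactly the assumptions of Theorem \ref{MPT}, which then delivers a critical value $\bar{y} \geq \inf_{x\in S(0,r)} \Phi(x) > 0$; letting $\bar{x}_2$ be a corresponding critical point gives $\Phi(\bar{x}_2) = \bar{y} > 0$.

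Finally I would conclude by a sign argument. Both points are nontrivial because $\Phi(0)=0$ while $\Phi(\bar{x}_1) < 0$ and $\Phi(\bar{x}_2) > 0$, and $\bar{x}_1 \neq \bar{x}_2$ since their critical values have opposite signs; hence $\Phi$ admits at least two nontrivial critical points. I do not anticipate a genuine obstacle here: the statement is a direct corollary of the two classical results recalled above, and the only point requiring minor care is the bookkeeping of signs, which simultaneously secures nontriviality and distinctness of the two solutions.
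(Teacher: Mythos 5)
Your proof is correct. Note that the paper does not prove Theorem \ref{BJM} at all --- it is quoted as Proposition 2 of \cite{BerJebMawhin} --- and your argument (Theorem \ref{CPTonU} with $U=B(0,r)$ for the negative-energy local minimizer, Theorem \ref{MPT} for the positive-energy mountain-pass point, then distinguishing the two points from each other and from $0$ by comparing signs of critical values) is precisely the standard derivation underlying the cited result, so you have supplied a proof the paper delegates to its reference. The only bookkeeping detail worth making explicit: Theorem \ref{CPTonU} yields $\Phi(\bar{x}_1)=\inf_{x\in U}\Phi(x)$ over the \emph{open} ball, and this equals $\inf_{x\in\overline{B}(0,r)}\Phi(x)<0$ because $\inf_{x\in\overline{B}(0,r)}\Phi(x)=\min\bigl\{\inf_{x\in B(0,r)}\Phi(x),\ \inf_{x\in S(0,r)}\Phi(x)\bigr\}$ and the hypothesis forces this minimum to come from the first term.
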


\begin{theorem}(Double Critical Point Theorem)\\
Suppose that \emph{\textbf{(Diff)}, \textbf{(diagPS)}, \textbf{(C)}} are satisfied. Moreover, assume that:
\begin{description}
	\item[\hspace{0.4cm} (DCPT1)] $\Phi_n(0) = 0$ for every $n\in\naturals$,
	\item[\hspace{0.4cm} (DCPT2)] there exists $r, R > 0$ such that 
	$$\forall_{n\in\naturals} \ - R < \inf_{x \in \overline{B}(0,r)} \ \Phi_n(x) < 0 < \inf_{x \in S(0,r)} \ \Phi_n(x),$$ 
	
	\item[\hspace{0.4cm} (DCPT3)] there exists $x_*$ such that $\|x_*\| > r$ and $\sup_{n\in\naturals} \ \Phi_n(x_*) < 0$.
\end{description}

\noindent
Then there exist two nontrivial critical points.
\label{doubleCPt}
\end{theorem}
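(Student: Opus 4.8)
The plan is to reduce the parametric statement to its non-parametric ancestor, Theorem \ref{BJM}, by showing that the limit functional $\Phi$ inherits all of its hypotheses from the sequence $(\Phi_n)$; once this is done, Theorem \ref{BJM} delivers the two critical points directly. Regularity of $\Phi$ is immediate from \textbf{(Diff)}, and the Palais--Smale condition for $\Phi$ follows from Lemma \ref{diagPS}. The equality $\Phi(0)=0$ is obtained from \textbf{(DCPT1)} together with the pointwise consequence of \textbf{(C)} at the point $0$, exactly as in the proof of Theorem \ref{ParametricMountainPassTheorem}. The inequality $\Phi(x_*)\leq 0$ with $\|x_*\|>r$ is copied verbatim from that same proof: putting $\eps:=-\sup_{n}\Phi_n(x_*)>0$, which is legitimate by \textbf{(DCPT3)}, and invoking \textbf{(C)} yields $\Phi(x_*)\leq 0$.

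The heart of the matter is the two-sided chain
$$-\infty<\inf_{x\in\overline{B}(0,r)}\Phi(x)<0<\inf_{x\in S(0,r)}\Phi(x).$$
Finiteness of the left-hand infimum is cheap: the uniform lower bound $-R$ from \textbf{(DCPT2)} and \textbf{(C)} give $\inf_{x\in\overline{B}(0,r)}\Phi(x)\geq -R-\eps$. Moreover, since $0\in\overline{B}(0,r)$ and $\Phi(0)=0$, one automatically has $\inf_{\overline{B}(0,r)}\Phi\leq 0$, so on the left only the strictness is genuinely at stake. For the two strict inequalities I would transfer the estimates of $(\Phi_n)$ to $\Phi$ on $\overline{B}(0,r)$ and on $S(0,r)$ through \textbf{(C)}, mimicking Lemma \ref{infulemma}. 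A structurally cleaner route, parallel to the way Theorem \ref{BJM} itself is assembled from Theorem \ref{CPTonU} and Theorem \ref{MPT}, is to bypass Theorem \ref{BJM} altogether and apply the two parametric tools separately: Theorem \ref{cptonU} with $U=B(0,r)$ produces a critical point $\bar{x}_1$ with $\Phi(\bar{x}_1)=\inf_{\overline{B}(0,r)}\Phi<0$, while Theorem \ref{ParametricMountainPassTheorem} produces a critical point $\bar{x}_2$ with $\Phi(\bar{x}_2)\geq\inf_{S(0,r)}\Phi>0$.

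The main obstacle is the passage of strictness to the limit. Under \textbf{(C)} alone infima converge, so the per-$n$ strict inequalities $\inf_{\overline{B}(0,r)}\Phi_n<0<\inf_{S(0,r)}\Phi_n$ yield only the non-strict limits $\inf_{\overline{B}(0,r)}\Phi\leq 0\leq\inf_{S(0,r)}\Phi$, which is insufficient. To recover the correct signs one must exploit that the threshold $0$ and the constant $R$ in \textbf{(DCPT2)} do not depend on $n$, i.e. one needs a uniform separation of exactly the kind already hard-wired into \textbf{(InfU)} and \textbf{(PMPT2)}; this is precisely what let the Mountain Pass argument set $\eps=\tfrac12\inf_{x\in S(0,r),\,n}\Phi_n>0$. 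I would therefore extract from \textbf{(DCPT2)} the uniform positivity on $S(0,r)$ and the uniform negativity on $\overline{B}(0,r)$ required to feed Theorem \ref{cptonU} and Theorem \ref{ParametricMountainPassTheorem}, and this is the step deserving care. Granting it, the conclusion is immediate: $\bar{x}_1$ and $\bar{x}_2$ are nontrivial because $\Phi(\bar{x}_1)<0=\Phi(0)$ and $\Phi(\bar{x}_2)>0=\Phi(0)$, and they are distinct since their critical values have opposite signs.
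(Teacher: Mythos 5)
Your plan is structurally the same as the paper's: verify the hypotheses of Theorem \ref{BJM} for the limit functional $\Phi$ (or, equivalently, feed the two parametric tools, Theorem \ref{cptonU} with $U=B(0,r)$ and Theorem \ref{ParametricMountainPassTheorem}), and your treatment of the routine items ($C^1$-regularity, Palais--Smale via Lemma \ref{diagPS}, $\Phi(0)=0$, $\Phi(x_*)\leq 0$, finiteness of $\inf_{\overline{B}(0,r)}\Phi$) is fine. The problem is the one step you explicitly defer. ``Extracting'' from \textbf{(DCPT2)} the uniform positivity $\inf_{x\in S(0,r),\,n\in\naturals}\Phi_n(x)>0$ and the uniform negativity $\sup_{n\in\naturals}\,\inf_{x\in\overline{B}(0,r)}\Phi_n(x)<0$ is not a delicate step --- it is impossible. \textbf{(DCPT2)} is a per-$n$ statement: only $r$, $R$ and the threshold $0$ are independent of $n$, and this is perfectly compatible with, say, $\inf_{S(0,r)}\Phi_n=\tfrac{1}{n}$ and $\inf_{\overline{B}(0,r)}\Phi_n=-\tfrac{1}{n}$ (take $\Phi_n=\Phi+\tfrac{1}{n}\psi$ with $\psi$ bounded, $\psi(0)=0$, $\psi$ positive on $S(0,r)$ and negative at an interior zero of $\Phi$; then \textbf{(C)}, \textbf{(DCPT1)}, \textbf{(DCPT3)} all persist). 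Since under \textbf{(C)} infima over a fixed set converge, in such a situation $\inf_{S(0,r)}\Phi=\inf_{\overline{B}(0,r)}\Phi=0$, so both strict inequalities demanded by Theorem \ref{BJM} fail for $\Phi$, and with them your closing nontriviality and distinctness argument, which rests entirely on the strict signs of the two critical values. ``Granting'' the uniform bounds is granting precisely the conclusion-bearing content of the theorem.

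You are, however, in good company: the paper's own proof breaks at exactly the same spot. It cites Lemma \ref{infulemma} and the Parametric Mountain Pass Theorem as if \textbf{(DCPT2)} implied \textbf{(InfU)} and \textbf{(PMPT2)}, whose hypotheses are uniform in $n$; and its final argument, which sets $\eps:=-\tfrac{1}{2}\inf_{x\in\overline{B}(0,r),\,n\in\naturals}\Phi_n(x)$ and then invokes \textbf{(C)}, is fallacious because \textbf{(C)} controls only $n\geq N$ while the infimum defining $\eps$ may be carried by finitely many small $n$. The repair, for your argument and the paper's alike, is to strengthen \textbf{(DCPT2)} to a genuinely uniform condition: there exist $r,R,\delta>0$ such that $-R<\inf_{\overline{B}(0,r)}\Phi_n<-\delta$ and $\delta<\inf_{S(0,r)}\Phi_n$ for all $n$. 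With that reading, applying \textbf{(C)} with accuracy $\delta/2$ gives $\inf_{\overline{B}(0,r)}\Phi<-\delta/2<0<\delta/2<\inf_{S(0,r)}\Phi$, and your route through Theorem \ref{BJM} (or through Theorems \ref{cptonU} and \ref{ParametricMountainPassTheorem}) goes through verbatim. (A smaller inherited defect: Lemma \ref{diagPS} requires \textbf{($\partial$C)}, i.e.\ uniform convergence of the derivatives as well, which neither the theorem's hypothesis list nor your proposal supplies; the paper has the same omission.)
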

\begin{proof}
Looking at Lemma \ref{infulemma} and the Parametric Mountain Pass Theorem, \textbf{(DCPT2)} implies that
$$-\infty < \inf_{x \in \overline{B}(0,r)} \ \Phi(x) \hspace{0.4cm} \text{and} \hspace{0.4cm} 0 < \inf_{x \in S(0,r)} \ \Phi(x).$$

\noindent
It remains to prove that $\inf_{x \in \overline{B}(0,r)} \ \Phi(x) < 0$. Then, applying Theorem \ref{BJM} for $\Phi$, we obtain two nontrivial critical points. 

We put 
$$\eps := -\frac{1}{2}\inf_{\substack{x \in \overline{B}(0,r) \\ n\in\naturals}} \ \Phi_n(x),$$

\noindent
which is positive by \textbf{(DCPT2)}. By \textbf{(C)} we know that there exists $N \in\naturals$ such that 
$$\forall_{\substack{x \in X \\ n \geq N}} \ \Phi(x) < \Phi_n(x) + \eps.$$

\noindent
This implies that
$$\inf_{x \in \overline{B}(0,r)} \ \Phi(x) < - \eps < 0,$$
  
\noindent
which ends the proof. 
\end{proof}

\section{Remarks on the abstract fractal setting}

Concerning the Sierpi\'{n}ski gasket we follow remarks collected in \cite{bonanno1}. Let $N\geq 2$ be a natural number and let $p_{1},\dots ,p_{N}\in \reals^{N-1}$ be so that $|p_{i}-p_{j}|=1$ for $i\neq j$. Define, for every $i\in\{1,\dots,N\}$, the map $S_{i}:\reals^{N-1}\rightarrow \reals^{N-1}$ by 
\begin{equation*}
S_{i}(x)=\frac{1}{2}x+\frac{1}{2} p_{i}.
\end{equation*}%
Let $S:=\{S_{1},\dots,S_{N}\}$ and denote by $G :P(\reals^{N-1})\rightarrow P(\reals^{N-1})$ the map assigning to a subset $A$ of $\reals^{N-1}$ the set 
\begin{equation*}
G(A)=\bigcup_{i=1}^{N}S_{i}(A).
\end{equation*}

\noindent
It is known that there is a unique non-empty compact subset $V$ of $\reals^{N-1}$, called the attractor of the family $S$, such that $G(V)=V$; see, Theorem 9.1 in \cite{Fa}.

The set $V$ is called the Sierpi\'{n}ski gasket in $\reals^{N-1}$. It can be constructed inductively as follows: Put $V_0:=\{p_{1},\dots ,p_{N}\}$ which is called the intrinsic boundary of $V$ and define $V_{m}:=G(V_{m-1})$ for $m\geq 1$. Put $V_*:=\bigcup_{m\geq 0}\ V_{m}$. Since $p_{i}=S_{i}(p_{i})$ for $i\in \{1,...,N\}$, we have $V_{0}\subseteq V_{1}$, hence $G(V_*)=V_*$. Taking into account that the maps $S_{i},\ i\in \{1,...,N\}$, are homeomorphisms, we conclude that $\overline{V_*}$ is a fixed point of $G$. On the other hand, denoting by $C$ the convex hull of the set $\{p_{1},\ldots,p_{N}\}$, we observe that $S_{i}(C)\subseteq C$ for $i=1,...,N$. Thus $V_m \subseteq C$ for every $m\in N$, so $\overline{V_*}\subseteq C$. It follows that $\overline{V_*}$ is non-empty and compact, hence $V=\overline{V_*}$.

We endow $V$ with the relative topology induced from the Euclidean topology on $\reals^{N-1}$. By $C(V)$, we denote the space of real-valued continuous functions on $V$ and by 
\begin{equation*}
C_{0}(V):=\bigg\{u\in C(V)\ :\ u|_{V_{0}}=0\bigg\}.
\end{equation*}

\noindent
The spaces $L^2(V,\mu),\ C(V)$ and $C_{0}(V)$ are endowed with the usual norms, i.e. the norm induced by the inner product
\begin{equation*}
\langle v|h\rangle = \int_{V}\ v(y)h(y)\ d\mu 
\end{equation*}

\noindent
and supremum norm $\|\cdot\|_{\infty }$, respectively.

For a function $u:V\rightarrow \reals$ and for $m\in N$ let 
\begin{equation}
W_{m}(u) := \left(\frac{N+2}{N}\right)^{m}\ \sum_{\underset{|x-y|=2^{-m}}{%
x,y\in V_{m}}}\ (u(x)-u(y))^{2}.  
\label{defWm}
\end{equation}

\noindent
Since $W_{m}(u)\leq W_{m+1}(u)$ for every natural $m$, we can put 
\begin{equation*}
W(u)=\lim_{m\rightarrow \infty}\ W_{m}(u).
\end{equation*}

\noindent
Define now 
\begin{equation*}
H_{0}^{1}(V):=\bigg\{u\in C_{0}(V)\ :\ W(u)<\infty \bigg\}.
\end{equation*}

\noindent
This space is a dense linear subset of $L^{2}(V,\mu)$ equipped with the $\|\cdot\|_{2}-$norm. We endow $H_{0}^{1}(V)$ with the norm 
\begin{equation*}
\|u\|=\sqrt{W(u)}.
\end{equation*}

\noindent
There is an inner product defining this norm: for $u,v\in H_{0}^{1}(V)$ and $m\in N$ let 
\begin{equation*}
{\mathcal{W}}_{m}(u,v)=\left(\frac{N+2}{N}\right)^{m}\ \sum_{\underset{|x-y|=2^{-m}}{x,y\in V_{m}}}(u(x)-u(y))(v(x)-v(y)).
\end{equation*}

Put 
\begin{equation*}
\W(u,v)=\lim_{m\rightarrow \infty }\ \W_{m}(u,v).
\end{equation*}

\noindent
The space $H_{0}^{1}(V)$, equipped with the inner product $W$ inducing the norm $\|\cdot\|$, becomes a real Hilbert spaces. Moreover, 
\begin{equation}
\|u\|_{\infty} \leq (2N+3)\|u\|,\hspace{0.4cm} \text{for every} \hspace{0.4cm} u\in
H_{0}^{1}(V),  
\label{embeddingconstant}
\end{equation}

\noindent
and the embedding 
\begin{equation*}
(H_{0}^{1}(V),\|\cdot\|) \hookrightarrow (C_{0}(V),\|\cdot\|_{\infty})
\end{equation*}

\noindent
is compact, see also, \cite{FuSc} for further details.

Note that $(H_{0}^{1}(V),\|\cdot\|)$ is a Hilbert space which is dense in $L^2(V,\mu)$. Furthermore, $W$ is a Dirichlet form on $L^{2}(V,\mu)$. Let $Z$ be a linear subset of $H_{0}^{1}(V)$, which is dense in $L^{2}(V,\mu)$. Then, in \cite{FaHu}, we find a linear self-adjoint operator $\Delta:Z\rightarrow L^{2}(V,\mu)$, the (weak) Laplacian on $V$, given by
\begin{equation*}
-\W(u,v)=\int_{V}\ \Delta u\cdot v\ d\mu, \hspace{0.4cm}\text{for every}\hspace{0.4cm} (u,v)\in Z\times H_{0}^{1}(V).
\end{equation*}

Let $H^{-1}(V)$ be the closure of $L^{2}(V,\mu )$ with respect to the
pre-norm 
\begin{equation*}
\|u\|_{-1}=\sup_{\substack{\|h\|=1\\ h\in H_{0}^{1}(V)}}\ |\langle u|h\rangle|,
\end{equation*}

\noindent
where $v\in L^{2}(V,\mu)$ and $h\in H_{0}^{1}(V)$. Then $H^{-1}(V)$ is a Hilbert
space and the relation 
\begin{equation*}
\forall_{v\in H_{0}^{1}(V)}\ -\W(u,v)=\langle \Delta u|v\rangle
\end{equation*}

\noindent
uniquely defines a function $\Delta u\in H^{-1}(V)$ for every $u\in H_{0}^{1}(V)$.

\section{Applications}

We observe that by (\ref{embeddingconstant}), for every $y\in V$
\begin{equation}
|x(y)|\leq \|x\|_{\infty }\leq (2N+3)\|x\|_{H_{0}^{1}(V)}.
\label{nier_C_H01}
\end{equation}

\noindent
Using the first inequality in (\ref{nier_C_H01}) and the fact that $\mu(V) =1$ we get
\begin{equation*}
\|x\|_{L^{2}(V,\mu)} \leq \|x\|_{\infty} \leq (2N+3) \|x\|_{H_{0}^{1}(V)}
\end{equation*}

\noindent
for any $x\in H_{0}^{1}(V)$.

In what follows, we fix $m$ and $u_m$. We say that a function $x\in
H_{0}^{1}(V)$ is called a weak solution of (\ref{rownanie}), if
 
\begin{equation*}
\W(x,v)-\int_{V}\ a(y)x(y)v(y)\ d\mu - \int_{V}\ g_{m}(y) f(x(y)) h(u_{m}(y)) v(x)\ d\mu =0,
\end{equation*}

\noindent
for every $v\in H_{0}^{1}(V)$. Consequently, whenever we write that we obtain a
solution to (\ref{rownanie}), we mean the weak one. The functional $J_{m} : H_{0}^{1}(V)\rightarrow \reals$, given by 
\begin{equation}
J_{m}(x) = \frac{1}{2}\|x\|^{2} - \frac{1}{2}\int_{V}\ a(y)x^{2}(y)\ d\mu
-\int_{V}\ g_{m}(y) F(x(y))h(u_{m}(y))\ d\mu 
\label{energyfunctional}
\end{equation}

\noindent
for every $x\in H_{0}^{1}(V)$, is the Euler action functional attached to the problem (\ref{rownanie}).

\begin{lemma}
Assume that \textbf{A1}, \textbf{A2} holds. Then, the functional $J_{m}:H_{0}^{1}(V)\rightarrow \reals$ defined by the relation (\ref{energyfunctional}) is a $C^{1}(H_{0}^{1}(V),\reals)-$functional. Moreover, 

\begin{equation*}
\forall_{w\in H_{0}^{1}(V)}\ J_{m}'(x)(w)=\W(u,w)-\int_{V}\ a(y)x(y)w(x)\ d\mu -\int_{V}\ g_{m}(y)f(x(y))h(u_{m}(y))\ d\mu,
\end{equation*}

\noindent
for each point $x\in H_{0}^{1}(V)$. In particular, $x\in H_{0}^{1}(V)$ is a
weak solution of problem (\ref{rownanie}) if and only if $x$ is a critical
point of $J_{m}$; $J_{m}$ is also weakly l.s.c.
\label{criticalpoint}
\end{lemma}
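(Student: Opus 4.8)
The plan is to split the functional into its three constituent pieces,
$$J_m(x) = \frac{1}{2}\|x\|^2 - \frac{1}{2}\int_V a(y)x^2(y)\, d\mu - \int_V g_m(y)F(x(y))h(u_m(y))\, d\mu =: \frac{1}{2}\|x\|^2 - Q(x) - \Psi(x),$$
and to treat each separately. The term $\frac{1}{2}\|x\|^2 = \frac{1}{2}\W(x,x)$ is the square of the Hilbert norm, hence smooth with derivative $w \mapsto \W(x,w)$; this part is standard. For the quadratic term $Q$, I would use the continuous embedding (\ref{embeddingconstant}) together with $a \in L^1(V,\mu)$ from \textbf{A1} to see that $Q$ is well defined, compute $Q(x+w)-Q(x) = \int_V a(y)x(y)w(y)\, d\mu + \frac{1}{2}\int_V a(y)w^2(y)\, d\mu$, and bound the remainder by $\frac{1}{2}(2N+3)^2\|a\|_{L^1}\|w\|^2 = o(\|w\|)$, which identifies $Q'(x)(w) = \int_V a(y)x(y)w(y)\, d\mu$.

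The heart of the argument is the nonlinear term $\Psi$, and here the decisive tool is again (\ref{embeddingconstant}): every $x \in H_0^1(V)$ is continuous and bounded, with $\|x\|_\infty \le (2N+3)\|x\|$. I would apply the mean value theorem pointwise, writing $F(x+w) - F(x) = f(x + \tau w)\,w$ with $\tau = \tau(y) \in (0,1)$, so that
$$\Psi(x+w) - \Psi(x) - \int_V g_m(y) f(x(y)) h(u_m(y))\, w(y)\, d\mu = \int_V g_m(y)\big[f(x(y)+\tau(y)w(y)) - f(x(y))\big]h(u_m(y))w(y)\, d\mu.$$
Using the bounds $g_m \le g^2$, $h \le h^2$ from \textbf{A2} and $\|w\|_\infty \le (2N+3)\|w\|$, the right-hand side is controlled by $g^2 h^2 (2N+3)\|w\|\int_V |f(x+\tau w) - f(x)|\, d\mu$. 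As $\|w\|\to 0$ we have $\|w\|_\infty \to 0$, so $x + \tau w \to x$ uniformly; since the range of $x$ lies in a compact interval and $f$ is uniformly continuous there, the integral tends to $0$, leaving a remainder of order $o(\|w\|)$. This yields Fréchet differentiability with $\Psi'(x)(w) = \int_V g_m(y)f(x(y))h(u_m(y))w(y)\, d\mu$. Continuity of $x \mapsto \Psi'(x)$ follows from the same estimate: if $x_n \to x$ in $H_0^1(V)$ then $x_n \to x$ uniformly, whence $\|\Psi'(x_n) - \Psi'(x)\| \le g^2 h^2 (2N+3)\int_V |f(x_n) - f(x)|\, d\mu \to 0$. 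This establishes that $J_m \in C^1(H_0^1(V),\reals)$ with the asserted derivative (the last integrand in the statement is meant to read $g_m(y)f(x(y))h(u_m(y))w(y)$), and the equivalence of weak solutions with critical points is then immediate, since $J_m'(x)=0$ is precisely the weak formulation tested against all $w \in H_0^1(V)$.

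For weak lower semicontinuity, I would exploit that the embedding $H_0^1(V) \hookrightarrow C_0(V)$ is compact, so that any weakly convergent sequence $x_n \rightharpoonup x$ in $H_0^1(V)$ (being bounded) satisfies $x_n \to x$ uniformly. The norm term $\frac{1}{2}\|\cdot\|^2$ is weakly lower semicontinuous because the Hilbert norm is. The remaining two terms are in fact weakly continuous: uniform convergence together with $a \in L^1$ gives $\int_V a(y)x_n^2(y)\, d\mu \to \int_V a(y)x^2(y)\, d\mu$, and uniform convergence together with continuity of $F$ and boundedness of $g_m$ and $h$ gives $\int_V g_m(y) F(x_n(y)) h(u_m(y))\, d\mu \to \int_V g_m(y) F(x(y)) h(u_m(y))\, d\mu$. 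Adding a weakly lower semicontinuous term to weakly continuous terms yields weak lower semicontinuity of $J_m$. I expect the only genuinely delicate point to be the differentiability of $\Psi$; every difficulty there is absorbed by the continuous embedding into $C_0(V)$, which converts $H_0^1$-estimates into uniform estimates and lets the mere continuity of $f$ (without any growth or Lipschitz hypothesis) suffice.
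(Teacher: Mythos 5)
Your proposal is correct, but there is nothing in the paper to compare it against: Lemma \ref{criticalpoint} is stated without any proof, the authors treating it as a known fact from the literature on the Sierpi\'{n}ski gasket (it is essentially the differentiability lemma of \cite{FaHu}, used also in \cite{bonanno1}). Your argument is the standard one and fills this gap completely: splitting $J_m$ into the norm term, the quadratic term $Q$ and the superposition term $\Psi$, and using the embedding (\ref{embeddingconstant}) to convert $H_0^1(V)$-smallness into uniform smallness, is exactly what lets mere continuity of $f$ and $h$ and integrability of $a$ suffice, with no growth or Lipschitz hypotheses; likewise the weak lower semicontinuity argument via compactness of $H_0^1(V)\hookrightarrow C_0(V)$ (weak convergence implies uniform convergence, so $Q$ and $\Psi$ are in fact weakly continuous) is the expected one. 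Two small points to polish. First, in the mean value step the function $\tau(y)$ need not be measurable, so the intermediate integral you write is not literally licensed; this costs nothing, because the integrand you start from, $g_m(y)\bigl[F(x(y)+w(y))-F(x(y))-f(x(y))w(y)\bigr]h(u_m(y))$, is measurable, and you can bound it pointwise by $g^2h^2\,\omega(\|w\|_\infty)\,|w(y)|$, where $\omega$ is the modulus of continuity of $f$ on the compact interval $[-\|x\|_\infty-1,\|x\|_\infty+1]$; the $o(\|w\|)$ conclusion is unchanged. Second, you verify continuity of $\Psi'$ but not of $Q'$; the one-line estimate $\|Q'(x_n)-Q'(x)\|\leq (2N+3)^2\|a\|_{L^1}\|x_n-x\|$ settles it. Finally, you are right that the displayed derivative in the statement contains typographical slips ($\W(u,w)$ should be $\W(x,w)$, $w(x)$ should be $w(y)$, and the factor $w(y)$ is missing from the last integral); your reading is the intended one, as the paper's definition of a weak solution confirms.
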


We note that assumptions \textbf{A1}-\textbf{A4} lead to the existence of a solution to (\ref{rownanie}) for any value of the parameter $m$ by the Mountain Pass Theorem as suggested in \cite{FaHu}.

\subsection{Application of the Parametric Mountain Pass Theorem}

Now, we apply our parametric results to problem (\ref{rownanie}). We start with the applications of the Parametric Mountain Pass Theorem .

\begin{theorem}
Assume that \textbf{A1}-\textbf{A4} are satisfied. Let $g_{m}\rightarrow g_{0}$ in $C(V)$ and let $u_{m}\rightarrow u_{0}$ in $L^{2}(V)$. Then the problem 
\begin{equation}
\left\{ 
\begin{array}{l}
\Delta x(y)+a(y)x(y)+g_{0}\left( y\right) f(x(y))h\left( u_{0}\left(
y\right) \right) =0\quad \text{for a.e. }y\in V\setminus V_{0}, \\ 
x|_{V_{0}}=0,%
\end{array}%
\right.   
\label{limit_MPT_problem}
\end{equation}

\noindent
has a nontrivial solution.
\end{theorem}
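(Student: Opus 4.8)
The plan is to apply the Parametric Mountain Pass Theorem (Theorem \ref{ParametricMountainPassTheorem}) with $\Phi_m := J_m$ the energy functional from (\ref{energyfunctional}) and $\Phi := J_0$ the functional associated with the limit data $g_0, u_0$. Its conclusion produces $\bar x \in H_0^1(V)$ with $J_0(\bar x) \geq \inf_{x \in S(0,r)} J_0(x) > 0 = J_0(0)$ and $\partial_1 J_0(\bar x) = 0$; since $J_0(\bar x) > 0 = J_0(0)$ the critical point is nontrivial, and by Lemma \ref{criticalpoint} it is exactly a nontrivial weak solution of (\ref{limit_MPT_problem}). Note first that $g_0$ and $u_0$ inherit the bounds of \textbf{A2} from the convergences $g_m \to g_0$ in $C(V)$ and $u_m \to u_0$ in $L^2(V)$, so \textbf{(Diff)} holds for the whole family by Lemma \ref{criticalpoint}.

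Next I would verify the mountain geometry, uniformly in $m$. Condition \textbf{(PMPT1)} is immediate since $F(0) = 0$ gives $J_m(0) = 0$. For \textbf{(PMPT2)}, I would choose $r$ so small that $\|x\| = r$ forces $\|x\|_\infty \leq (2N+3)r \leq M_1$ through (\ref{embeddingconstant}); then \textbf{A1} (namely $a \leq 0$) makes the quadratic $a$-term nonnegative, while \textbf{A4} bounds $|g_m(y) F(x(y)) h(u_m(y))|$ by a multiple of $|x(y)|$, so that $J_m(x) \geq \tfrac12 r^2 - C r$ with a constant $C$ controlled uniformly in $m$; shrinking $r$ then yields $\inf_{x \in S(0,r),\, n} J_m(x) > 0$. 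For \textbf{(PMPT3)} I would use the Ambrosetti--Rabinowitz condition \textbf{A3}: the inequality $0 < \theta_m F(v) \leq v f(v)$ with $\theta_m > 2 + \eps$ forces $F$ to grow superquadratically, so along a fixed ray $t \mapsto J_m(t\varphi)$ the nonlinear term dominates the quadratic part and $J_m(t\varphi) \to -\infty$; the uniform lower bound $\theta_m > 2 + \eps$ lets me fix a single $x_* = t\varphi$ with $\|x_*\| > r$ and $\sup_m J_m(x_*) < 0$.

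For the compactness input \textbf{(diagPS)}, I would take a sequence $(x_m)$ with $\sup_m |J_m(x_m)| < \infty$ and $\partial_1 J_m(x_m) \to 0$ and run the standard Rabinowitz estimate: combining \textbf{A1}, \textbf{A3} and the uniform bound $\tfrac12 - \tfrac{1}{\theta_m} \geq c$ in the quantity $J_m(x_m) - \tfrac{1}{\theta_m}\langle \partial_1 J_m(x_m), x_m\rangle$ yields $c\|x_m\|^2 \leq \text{const} + o(1)\|x_m\|$, so $(x_m)$ is bounded in $H_0^1(V)$. The compact embedding $H_0^1(V) \hookrightarrow C_0(V)$ then furnishes a subsequence converging uniformly, and feeding this together with $g_m \to g_0$ and $u_m \to u_0$ back into $\partial_1 J_m(x_m) = o(1)$ upgrades weak to strong convergence in $H_0^1(V)$; the fact that the functionals differ at each index is absorbed by the uniform constants of \textbf{A3} and by these two convergences.

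The delicate point is \textbf{(C)}. Since $J_m(x) - J_0(x) = -\int_V [g_m(y) h(u_m(y)) - g_0(y) h(u_0(y))] F(x(y))\, d\mu$ and \textbf{A3} makes $F$ unbounded, genuine uniform convergence over all of $H_0^1(V)$ cannot be expected; the key observation is that the proof of Theorem \ref{ParametricMountainPassTheorem} invokes \textbf{(C)} only on the bounded sphere $S(0,r)$ and at the two points $0$ and $x_*$, where (\ref{embeddingconstant}) makes the family $\{F(x(\cdot)) : \|x\| \leq R\}$ uniformly bounded. On such bounded sets I would split $g_m h(u_m) - g_0 h(u_0) = (g_m - g_0)h(u_0) + g_m(h(u_m) - h(u_0))$, control the first summand by $\|g_m - g_0\|_\infty \to 0$ together with the boundedness of $h$, and the second by passing to a subsequence along which $u_m \to u_0$ almost everywhere, so that continuity of $h$ and dominated convergence close the estimate. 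Establishing this localized form of \textbf{(C)} is the main obstacle; once it is in place every hypothesis of the Parametric Mountain Pass Theorem is met and the nontrivial solution of (\ref{limit_MPT_problem}) follows.
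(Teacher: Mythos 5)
Your route is the same as the paper's: apply Theorem \ref{ParametricMountainPassTheorem} to $\Phi_m=J_m$, $\Phi=J_0$, using \textbf{A3} both for the compactness condition \textbf{(diagPS)} and for \textbf{(PMPT3)}, and \textbf{A4} for \textbf{(PMPT2)}; your Rabinowitz estimate plus compact-embedding argument for \textbf{(diagPS)} is exactly what the paper outsources to Proposition 2.24 of \cite{FaHu}, and your treatment of \textbf{(PMPT3)} (the growth bound $F(v)\geq b_1|v|^{\theta_m}-b_2$ along a single ray, with the uniform inequality $\theta_m>2+\eps$ giving an $x_*$ independent of $m$) is the paper's. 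However, your verification of \textbf{(PMPT2)} contains a step that fails: from $J_m(x)\geq\tfrac12 r^2-Cr$ you cannot get positivity by \emph{shrinking} $r$ --- the constant $C$ produced by \textbf{A4} does not depend on $r$, so for small $r$ the linear term dominates and the lower bound is negative. The constant in \textbf{A4} is calibrated to the specific radius $r=M_1/(2N+3)$: for $\|x\|=r$ the embedding (\ref{embeddingconstant}) gives $\|x\|_\infty\leq M_1$, and then $Cr\leq \frac{r^2}{2(\beta_m+1)}$, so $J_m(x)\geq\frac{r^2}{2}\cdot\frac{\beta_m}{\beta_m+1}$; this is the bound the paper quotes from formula (3.8) of \cite{FaHu}, and its uniform positivity in $m$ additionally requires the $\beta_m$ to stay bounded away from zero.

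The second problem is \textbf{(C)}, and here your diagnosis is correct: since $J_m(x)-J_0(x)=-\int_V\left[g_m(y)h(u_m(y))-g_0(y)h(u_0(y))\right]F(x(y))\,d\mu$ and $F$ is superquadratic by \textbf{A3}, uniform convergence over all of $H_0^1(V)$ cannot hold; the paper does not resolve this either, asserting only that \textbf{(Diff)} and \textbf{($\partial$C)} are ``obviously satisfied by continuity''. But your proposed repair --- proving \textbf{(C)} on bounded sets and claiming that Theorem \ref{ParametricMountainPassTheorem} invokes it only at $0$, on $S(0,r)$ and at $x_*$ --- is not sufficient as stated, because that theorem also needs the Palais--Smale condition for $\Phi=J_0$, which it derives via Lemma \ref{diagPS}, and that lemma applies \textbf{($\partial$C)} along an \emph{arbitrary} Palais--Smale sequence of $J_0$, not known in advance to be norm-bounded. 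The way out is to drop Lemma \ref{diagPS} in favour of a direct argument: the limit data inherit \textbf{A1}--\textbf{A3} (the bounds of \textbf{A2} pass to $g_0$ and $h(u_0)$, and \textbf{A3} involves $m$ only through $\theta_m$, any fixed one of which works for $J_0$), so your own Rabinowitz estimate together with Proposition 2.24 of \cite{FaHu} gives the Palais--Smale condition for $J_0$ directly; after that, the bounded-set form of \textbf{(C)} --- which your splitting of $g_mh(u_m)-g_0h(u_0)$, a.e.\ convergence along a subsequence and dominated convergence do yield --- transfers the mountain geometry, and the classical Theorem \ref{MPT} applied to $J_0$ finishes. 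As submitted, though, your proof explicitly leaves ``the main obstacle'' open, so it is incomplete at exactly the point where the paper's own proof is also unjustified.
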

\begin{proof}
We need to demonstrate that the assumptions of Theorem \ref{ParametricMountainPassTheorem} are satisfied. Some calculations are taken from \cite{FaHu} and repeated here for the Reader's convenience. We believe, it makes our proof more clear.

From \cite{FaHu} p. 563, we see that \textbf{(PS$\lambda$)} holds. Indeed, suppose
that $|J_{m}(x_k)| \leq b$ for all $m,k$ and that 
\begin{equation*}
\lim_{m\rightarrow \infty}\ \lim_{k\rightarrow \infty}\ \|J_{m}'(x_k)\|_{H_{0}^{-1}(V)}=0.
\end{equation*}

\noindent
We see that 
\begin{equation*}
b+\frac{1}{2+\eps} \|x_k\| \geq b+\frac{1}{\theta_{m}} \|x_k\| \geq \left( \frac{1}{2}-\frac{1}{\theta _{m}}\right) \|x_k\|^{2} \geq c\|x_{k}\|^2.
\end{equation*}

\noindent
By Proposition 2.24 from \cite{FaHu} which says
that if a Palais Smale sequence is bounded then it is convergent, possibly up to a
subsequence, we have our assertion. 

Conditions \textbf{(Diff)} and \textbf{($\partial C$)} are obviously satisfied by continuity. Condition \textbf{(PMPT1)} is satisfied by definition of $J_m$. We fix a ball $B$ such that $|v(y)| \leq M_1$ for any $v\in B$. Concerning \textbf{(PMPT2)}, we see that it follows from formula (3.8) \cite{FaHu}, p. 562. Indeed, a function $x \mapsto \frac{x}{1+x}$ is bounded from below by some $\frac{\beta}{1+\beta}$ on $[\beta,+\infty)$. From the formula we mentioned earlier, we have
\begin{equation*}
J_m(u) \geq \frac{\beta}{1+\beta} \frac{M_0^2}{2(2N+3)}.
\end{equation*}

As for \textbf{(PMPT3)} note that there exists positive constants $b_1,\ b_2$ such that for all $v$:
\begin{equation*}
F(v) \geq b_1 |v|^{\theta_{m}}-b_2.
\end{equation*}

\noindent
Let us fix $x\in H_{0}^{1}(V)$ such that $|x(y)| \geq 1.$ Then, for $s>0$ we have
\begin{equation*}
\int_V\ g_{m}(y) F(sx(y))h(u_{m}(y))\ d\mu \geq b_1 g^1 h^1 s^{\theta_{m}}-b_2.
\end{equation*}%

\noindent
Therefore, for $s>1$ 
\begin{equation*}
J_m(sx) \leq \frac{1}{2}\left( \|x\| - \int_V\ a(y)x^{2}(y)\ d\mu \right) s^{2}-\alpha s^{2+\eps}+b_2,
\end{equation*}

\noindent
where $\alpha = b_1 g^1 h^1$. Since $\eps >0,\ a\leq 0$, then there exists $s^*$ such that 
\begin{equation*}
\frac{1}{2}\left( \|x\| - \int_V\ a(y)x^{2}(y)\ d\mu \right)(s^*)^{2}-\alpha(s^*)^{2+\eps}+b_2<0.
\end{equation*}

\noindent
Consequently, this implies the existence of $x^*$, independent of $m$ and outside of $B$, which
satisfies 
\begin{equation*}
J_{m}\left( x^{\ast }\right) <0\text{.}
\end{equation*}%
Now our assertion follows by Theorem 9.
\end{proof}

\subsection{Results for the double critical point theorem}

\begin{theorem}
Assume that A1-A5 are satisfied. Let $g_{m}\rightarrow g_{0}$ in $C\left(
V\right) $ and let $u_{m}\rightarrow u_{0}$ in $L^{2}\left( V\right) $. Then
problem (\ref{limit_MPT_problem}) has at least two nontrivial solutions.
\end{theorem}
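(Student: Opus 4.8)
The plan is to verify that the hypotheses of the Double Critical Point Theorem (Theorem \ref{doubleCPt}) hold for the sequence of energy functionals $J_m$ attached to problem (\ref{rownanie}), and then invoke that theorem to produce the two nontrivial critical points, which by Lemma \ref{criticalpoint} are exactly the two nontrivial weak solutions of the limit problem (\ref{limit_MPT_problem}). Much of the work has already been done in the previous theorem: under \textbf{A1}--\textbf{A4}, the conditions \textbf{(Diff)}, \textbf{(diagPS)} (i.e.\ the diagonal Palais--Smale condition), \textbf{($\partial$C)} and \textbf{(C)} were established, as were \textbf{(PMPT1)} (so $J_m(0)=0$, giving \textbf{(DCPT1)}) and \textbf{(PMPT3)} (the existence of $x_*$ with $\|x_*\|>r$ and $\sup_m J_m(x_*)<0$, giving \textbf{(DCPT3)}). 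The coercivity-type estimate from \textbf{(PMPT2)} provides the lower bound $\inf_{x\in S(0,r)} J_m(x)>0$ uniformly in $m$. Thus the only genuinely new ingredient needed is the strict negativity part of \textbf{(DCPT2)}, namely the existence of $R>0$ with
$$\forall_{m\in\naturals} \ -R < \inf_{x \in \overline{B}(0,r)} \ J_m(x) < 0,$$
together with the uniform lower bound $-R$.

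First I would confirm the uniform lower bound $-R$ on $\inf_{\overline{B}(0,r)} J_m$. On the bounded set $\overline{B}(0,r)$ the quadratic term $\tfrac12\|x\|^2$ is bounded, the term $-\tfrac12\int_V a x^2\,d\mu$ is nonnegative by \textbf{A1}, and the nonlinear term $\int_V g_m F(x) h(u_m)\,d\mu$ is bounded uniformly in $m$: indeed by (\ref{nier_C_H01}) we have $\|x\|_\infty \le (2N+3)r$ on $\overline{B}(0,r)$, so $F(x(y))$ ranges over a fixed compact interval, and by \textbf{A2} the factors $g_m$ and $h(u_m)$ lie in $[g^1,g^2]$ and $[h^1,h^2]$ respectively. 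Hence $|J_m(x)|$ is bounded by a constant depending only on $r$, $N$, $g^2$, $h^2$ and $\max_{|\xi|\le(2N+3)r}|F(\xi)|$, which furnishes the required $R$ independent of $m$.

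The key step, and the one I expect to be the main obstacle, is producing a point in $\overline{B}(0,r)$ where $J_m$ is strictly negative, uniformly in $m$ — this is precisely where assumption \textbf{A5} enters, and it is the reason \textbf{A5} is imposed for this theorem but not the previous one. The idea is to take a fixed nonzero direction $x_0\in H_0^1(V)$ (normalized so that a suitable small multiple stays inside $\overline{B}(0,r)$ and so that $|x_0(y)|\le 1$ where it matters) and evaluate $J_m(t x_0)$ for small $t>0$. By \textbf{A5}, on $[-1,1]$ we have $F(v)\ge \eta_m|v|\ge \eta|v|$, so the nonlinear integral is bounded below by a positive constant times $t$ (linear in $t$), while the quadratic part $\tfrac12\|tx_0\|^2$ minus the nonnegative $a$-term is of order $t^2$. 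For $t$ small enough the linear-in-$t$ gain dominates the $t^2$ cost, forcing $J_m(tx_0)<0$; because $\eta_m>\eta>0$ and $g_m,h$ are uniformly bounded below by \textbf{A2}, one can choose the same $t$ and hence obtain a bound $\inf_{\overline{B}(0,r)} J_m \le -\delta<0$ uniformly in $m$, or at least $<0$ for each $m$, which is what \textbf{(DCPT2)} requires.

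With all of \textbf{(DCPT1)}, \textbf{(DCPT2)}, \textbf{(DCPT3)} verified alongside \textbf{(Diff)}, \textbf{(diagPS)} and \textbf{(C)}, I would apply Theorem \ref{doubleCPt} directly to the sequence $(J_m)$. Its conclusion yields two nontrivial critical points of the limit functional $J_0$ corresponding to the limit data $g_0$ and $u_0$ (the functional obtained via the uniform convergence \textbf{(C)}, which follows from $g_m\to g_0$ in $C(V)$ and $u_m\to u_0$ in $L^2(V)$ together with the continuity of $f$, $F$ and $h$ and the embedding (\ref{embeddingconstant})). By Lemma \ref{criticalpoint} these critical points are precisely two nontrivial weak solutions of (\ref{limit_MPT_problem}), completing the proof. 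The only care required is to check that the convergences of $g_m$ and $u_m$ indeed deliver \textbf{(C)} for the functionals and their derivatives in the uniform sense demanded; this is routine given the uniform bounds in \textbf{A2} and the compact embedding, but it should be stated explicitly since the limit functional is not assumed a priori to satisfy \textbf{A1}--\textbf{A5}.
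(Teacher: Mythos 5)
Your proposal is correct and follows essentially the same route as the paper: reduce everything to verifying \textbf{(DCPT2)}, then use \textbf{A5} to bound the nonlinear integral below by a term linear in the scaling parameter, so that for small $t$ the linear gain dominates the quadratic cost $\tfrac{1}{2}t^{2}\tau$, uniformly in $m$, with the remaining hypotheses inherited from the previous theorem. If anything, your normalization (requiring only $|x_{0}(y)|\leq 1$ and $x_{0}\neq 0$) is slightly more careful than the paper's choice of $x$ with $\tfrac{1}{2}\leq |x(y)|\leq 1$ for all $y\in V$, which is impossible for functions vanishing on $V_{0}$, but the underlying idea is identical.
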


We need only verify condition (DCPT2). Let us fix $x\in H_{0}^{1}\left(
V\right) $ such that $\left\Vert x\right\Vert \leq (2N+3)^{-1}$ and $\frac{1%
}{2}1\leq \left\vert x(y)\right\vert \leq 1$ for all $y\in V$. By A5 we see
that for $s>0$%
\begin{equation*}
\int_{V}g_{m}\left( y\right) F(sx(y))h\left( u_{m}\left( y\right) \right)
d\mu \geq \eta _{m}s\int_{V}\left\vert x(y)\right\vert g_{m}\left( y\right)
h\left( u_{m}\left( y\right) \right) d\mu \geq \alpha s\text{,}
\end{equation*}%
where $\alpha =\frac{1}{2}\eta g^{1}h^{1}$. Put 
\begin{equation*}
\tau =\left( \Vert x\Vert ^{2}-\int_{V}a(y)x^{2}(y)d\mu \right) >0.
\end{equation*}%
Thus 
\begin{equation*}
J_{m}\left( sx\right) =\frac{1}{2}s^{2}\tau -\alpha s.
\end{equation*}%
Therefore for $s$ small enough $J_{m}\left( sx\right) <\gamma <0$, where $%
\gamma $ does not depend on $m.$

\begin{acknowledgement}
This research of the first author was supported by grant no.
2014/15/B/ST8/02854 "Multiscale, fractal, chemo-hygro-thermo-mechanical
models for analysis and prediction the durability of cement based composites"
\end{acknowledgement}

\begin{tabular}{l}
Marek Galewski \\ 
Institute of Mathematics, \\ 
Technical University of Lodz, \\ 
Wolczanska 215, 90-924 Lodz, Poland, \\ 
marek.galewski@p.lodz.pl \\
\vspace{0.3cm}\\
Mateusz Krukowski \\
Institute of Mathematics, \\ 
Technical University of Lodz, \\ 
Wolczanska 215, 90-924 Lodz, Poland, \\
krukowski.mateusz13@gmail.com
\end{tabular}

\end{document}